\documentclass[a4paper,11pt]{article}
\title{Relative model completeness of henselian valued fields with finite ramification and various value groups}
\author{Anna De Mase\protect{\footnote{Dipartimento di Matematica e Fisica, Università degli Studi della Campania ``Luigi Vanvitelli", viale Lincoln 5, Caserta (Italy), anna.demase@unicampania.it; ORCID: 0000-0003-4902-6001 \\ Partially supported by PRIN 2022 - "Models, sets and classifications".}}}
\date{}
\usepackage{amsmath}
\usepackage{amsthm}
\usepackage{wasysym}
\usepackage{amssymb}
\usepackage{amsfonts}
\usepackage{tikz}
\usepackage[hyperfootnotes]{hyperref}
\usepackage{times}
\usetikzlibrary{matrix}
\begin{document}
\maketitle
\newtheorem*{thm*}{Theorem}
\newtheorem{thm} {Theorem}[section]
\newtheorem{defn} [thm] {Definition}
\newtheorem{cor} [thm] {Corollary}
\newtheorem{rem} [thm] {Remark}
\newtheorem{lem} [thm] {Lemma}
\newtheorem{prop} [thm] {Proposition}
\newtheorem{fact} [thm] {Fact}
\begin{abstract}
We investigate the model completeness of the theory of a mixed characteristic henselian valued field with finite ramification relative to the residue field and value group. We address the case in which the valued field has a value group with finite spines, and the case in which the value group is elementarily equivalent to the infinite lexicographic sum of $\mathbb{Z}$ with a minimal positive element. In both cases, we find a one-sorted language in which the theory of the valued field is model complete, if the theory of the residue field is model complete in the language of rings.
\end{abstract}
\vspace{2mm}
\textbf{MSC:} \textit{primary} 03C10, 12J10; \textit{secondary} 03C64, 12L12.\newline
\textbf{Key words:} valued fields, ordered abelian groups, model completeness, AKE principle.
\section*{Introduction}
Object of interest in research in the model theory of valued fields is identifying the model theoretical properties that can be transferred from the value group and the residue field to the valued field. The Ax-Kochen/Ershov (AKE) principle (Theorem \ref{AKE}) obtained by Ax and Kochen in \cite{AxK65i} and independently by Ershov in \cite{Ershov65} has been the first transfer principle for henselian valued fields, stating that two equicharacteristic zero henselian valued fields are elementary equivalent in the three-sorted language $\mathfrak{L}_{vf}$ of valued fields if and only if their residue fields and value groups are elementarily equivalent in the language $\mathfrak{L}_{ring}$ of rings and the language $\mathfrak{L}_{oag}$ of ordered abelian groups, respectively. The same result was proved for mixed characteristic $(0,p)$ henselian valued fields, by Belair (\cite[Corollary 5.2]{BEL99}) in the case of unramified valuations (when the valuation of $p$ is minimal positive in the value group) and perfect residue fields, and recently by Anscombe and Jahnke (\cite{AJ21}) in the unramified mixed characteristic case and arbitrary residue field. Some AKE-like principles have been given also for expansions of henselian valued fields. In \cite{ACGZ22} M. Aschenbrenner, A. Chernikov, A. Gehret, and M. Ziegler provide an Ax-Kochen/Ershov-like characterization for distal expansions of finitely ramified henselian valued fields, where distality is a combinatorial property of the theory. In the cases of pure unramified valued fields, if $K_{1}$ and $K_{2}$ are two henselian valued fields such that $K_{1}\subseteq K_{2}$, then the embedding of $K_{1}$ into $K_{2}$ is elementary if and only if the embeddings of the residue fields and the value groups are elementary. This implies that the theory of a henselian valued field $K$, either of equicharacteristic zero or unramified of mixed characteristic, is model complete relative to the theories of the residue field and the value group. Notice that an AKE-like theorem does not hold in the case of two mixed characteristic valued fields with finite ramification (i.e. when the valuation of $p$ is strictly greater than the minimal positive element of the value group) not even in the case in which both valued fields have the same ramification. Indeed, the theory of the residue field in $\mathfrak{L}_{ring}$, the theory of the value group in $\mathfrak{L}_{oag}$, and a fixed ramification do not determine the theory of the valued field in $\mathfrak{L}_{vf}$ up to elementary equivalence (see \cite[Example 2.3]{ADJ23}). In the latter paper (\cite{ADJ23}), in particular, the authors show that an AKE-like theorem holds at the cost of considering further structure on the residue field. However, assuming the model completeness of the residue field in the language of rings, results on the relative model completeness of the theory of a fixed valued field hold. Indeed, in \cite{MD16} Derakshan and Macintyre prove model completeness of the theory of a henselian valued field with finite ramification and valued in a $\mathbb{Z}$-group relative to the residue field. In particular, they show the following
\begin{thm*}[{ \cite[Theorem 1]{MD16}}]
Let $K$ be a henselian valued field of mixed characteristic with finite ramification. Suppose the value group of $K$ is a $\mathbb{Z}$-group. If the theory of the residue field of $K$ is model complete in the language of rings, then the theory of $K$ is model complete in the language of rings.
\end{thm*}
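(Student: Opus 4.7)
The plan is to verify Robinson's test for model completeness: show that any embedding $K_1\hookrightarrow K_2$ between two models of $T:=\mathrm{Th}(K)$ in $\mathfrak{L}_{ring}$ is elementary. The essential preliminary step is to establish that the valuation ring of any model of $T$ is $\emptyset$-definable in $\mathfrak{L}_{ring}$. For henselian valued fields of mixed characteristic with finite ramification and value group a $\mathbb{Z}$-group, such a definition is available (going back to Ax's work on $p$-adic integers and refined by Cluckers--Derakhshan--Leenknegt--Macintyre; it can be phrased, for instance, via definability of the $n$-th powers for a well-chosen $n$). Consequently the ring embedding $K_1\hookrightarrow K_2$ automatically respects the (uniquely definable) valuation rings, hence induces embeddings $k_1\hookrightarrow k_2$ of residue fields and $\Gamma_1\hookrightarrow\Gamma_2$ of value groups.

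Next I would argue that these induced embeddings are elementary. The residue-field embedding is elementary directly by the model-completeness hypothesis on $\mathrm{Th}(k)$ in $\mathfrak{L}_{ring}$. For the value groups, both $\Gamma_1$ and $\Gamma_2$ are $\mathbb{Z}$-groups, and the ramification index $e$ is part of $T$: the sentence ``$v(p)=e\cdot 1$'' forces the minimal positive element of $\Gamma_1$ to be sent to the minimal positive element of $\Gamma_2$. Any embedding of $\mathbb{Z}$-groups that preserves the minimal positive element also preserves the Presburger congruences $n\mid x$, and is therefore elementary.

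The final step is to invoke a relative AKE-style transfer principle for finitely ramified mixed-characteristic henselian valued fields: if $K_1\subseteq K_2$ is an extension of such fields with $k_1\preceq k_2$ and $\Gamma_1\preceq\Gamma_2$, then $K_1\preceq K_2$ as valued fields, and hence as rings. I would obtain this by reducing to the unramified AKE theorem of B\'elair (or Anscombe--Jahnke): each $K_i$ is a totally ramified extension of degree $e$ of its maximal unramified subfield $K_i^{\mathrm{ur}}$; the $\mathbb{Z}$-group hypothesis together with finite ramification provides a uniformizer $\pi_i$ satisfying an Eisenstein polynomial over $K_i^{\mathrm{ur}}$ whose reduction lies in the (elementarily embedded) residue field. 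The unramified AKE principle then yields $K_1^{\mathrm{ur}}\preceq K_2^{\mathrm{ur}}$, and one lifts this to $K_1\preceq K_2$ by noting that every $\mathfrak{L}_{ring}$-formula on $K_i$ can be rewritten, via the basis $1,\pi_i,\dots,\pi_i^{e-1}$, in terms of $\mathfrak{L}_{ring}$-formulae on $K_i^{\mathrm{ur}}$ together with the finite Eisenstein data.

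The main obstacle is precisely this last step: controlling the totally ramified finite piece compatibly across the embedding, so that the unramified transfer can be upgraded to the full fields. The $\mathbb{Z}$-group hypothesis is indispensable here, both because it forces the minimal positive element (and hence a distinguished uniformizer) to match under the embedding, and because it reduces the ramified part to a bounded, definable amount of information rather than the far more delicate situation that arises for value groups with infinitely many convex subgroups.
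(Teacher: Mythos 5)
Your first two steps are sound: Julia Robinson's formula (Proposition \ref{valdefn} of the paper) makes the valuation ring existentially and universally $\emptyset$-definable in $\mathfrak{L}_{ring}$, so a ring embedding $K_1\hookrightarrow K_2$ of models of $T$ is automatically a valued field embedding (Lemma \ref{extvf}), and your argument that the induced embedding of $\mathbb{Z}$-groups is elementary (the shared minimal positive element forces the Presburger congruence classes to agree, so by Presburger quantifier elimination the embedding is elementary) is correct.

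The gap is in the final step. You claim that each $K_i$ is a totally ramified extension of degree $e$ of a maximal unramified subfield $K_i^{\mathrm{ur}}$, with $K_i=K_i^{\mathrm{ur}}(\pi_i)$ for an Eisenstein $\pi_i$, and then transport the unramified AKE transfer along the $\pi$-basis. This decomposition holds for a \emph{complete} valued field whose value group is literally $\mathbb{Z}$ (Serre's structure theorem, Theorem \ref{wittringchar}), but it fails for a henselian field whose value group is merely a $\mathbb{Z}$-group. For instance, an ultrapower $\prod_D K$ of the standard $K$ has value group a nonstandard $\mathbb{Z}$-group and is \emph{not} a finite extension of any unramified subfield: there is no subfield $L$ with $[\prod_D K : L]=e$ and $L$ unramified over a Witt ring of the residue field. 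So the reduction ``unramified AKE plus a finite Eisenstein piece'' does not apply directly to $K_1\subseteq K_2$.

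The paper (following Derakhshan--Macintyre's actual proof of \cite[Theorem 1]{MD16}) repairs this by inserting a coarsening step that you skipped. After passing to $\aleph_1$-saturated models, one coarsens $v$ at the smallest convex subgroup $\Delta$ containing $v(p)$. This produces two auxiliary valued fields: the coarse field $\dot K_i=(K_i,\dot v_i)$, of equicharacteristic zero with value group $\Gamma_i/\Delta$ and residue field the core field $\mathring K_i$, and the core field $\mathring K_i$ itself, of mixed characteristic with value group $\Delta\cong\mathbb{Z}$ (genuinely $\mathbb{Z}$, not just a $\mathbb{Z}$-group) and residue field $k_i$. Saturation makes $\mathring K_i$ Cauchy complete (Fact \ref{saturationpseudocomplete}, Lemma \ref{coarseningpreservespseudocompleteness}, Remark \ref{coreCauchycomplete}), and \emph{only then} does the Eisenstein--Witt decomposition $\mathring K_i=W(k_i)(\pi)$ apply, to $\mathring K_i$ rather than to $K_i$. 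One checks that the same $\pi$ and the same Eisenstein polynomial work for both core fields, gets $W(k_1)\preceq W(k_2)$ from the unramified AKE theorem, lifts this to $\mathring K_1\preceq\mathring K_2$ by the interpretation of $W(k_i)(\pi)$ inside $W(k_i)^e$, and finally applies the (resplendent) equicharacteristic-zero AKE principle to the coarse fields $\dot K_1\subseteq\dot K_2$, together with $\Gamma_1/\Delta\preceq\Gamma_2/\Delta$ (Proposition \ref{MCquotients}) and $\mathring K_1\preceq\mathring K_2$, to conclude $K_1\preceq K_2$. Without this two-stage decomposition your last step, as stated, does not go through.
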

Here we aim at generalizing their result to henselian mixed characteristic valued fields with finite ramification and value groups which are not $\mathbb{Z}$-groups. In particular, we stress that these generalizations give relative model completeness of the valued fields in one-sorted languages that are expansions of the language of rings. $\\$
In the first section, we consider the case of a value group with finite spines, i.e. with finitely or countably many definable convex subgroups (Theorem \ref{TheoremModelCompletenessFiniteRamification}). For this result, an analysis of the model theory of ordered abelian groups has been necessary, referring in particular to \cite{CH11} and \cite{HH18}. An analogous result has been obtained in the case of a value group elementarily equivalent to the infinite lexicographic sum of $\mathbb{Z}$ with a minimal positive element (Theorem \ref{TheoremModelCompletenessProductZ}). In the last section, we provide examples of henselian valued fields with the previous value groups, and these fields are obtained as fields of power series with one, finitely many, or countably many variables with their respective valuations.

%We start by proving the following theorem
%\begin{thm}\label{TheoremModelCompletenessFiniteRamification} Let $K$ be an Henselian valued field of mixed characteristic $(0,p)$, finite ramification $e\geq1$, perfect residue field and value group $\Gamma$ with finite spines. If the theory of the residue field is model complete in the language of rings, then the theory of $K$ is model complete in the language $\mathcal{L}=\{0,+,-,\leq, A_{i}\}_{i<\alpha}$ where $A_{i}$ is a unary predicate interpreted by $$A_{i}^{K}=\{a\in K | v(a)=c_{i}^{\Gamma}\mod{H_{i}}\},$$ where the $(H_{i})_{i<\alpha}$ are the definable convex subgroups of $\Gamma$ and $c_{i}$ is a representative for the minimal element of the quotient $\Gamma/H_{i}$ if it is discrete, $0$ otherwise.  \end{thm}

\section{Notations}
In the model-theoretic analysis of a valued field, different languages have been used according to which algebraic aspect one wants to investigate. The languages that are usually considered are either one-sorted or multisorted. We commonly use the following notations:
\begin{itemize} \item $\Gamma$ for the value group;
\item $O$ for the valuation ring;
\item $M$ for the maximal ideal;
\item $k$ for the residue field;
\item $(K,v)$, $(K,O)$, $(K,\Gamma,k)$, $(K,k,\Gamma,v,res)$ (where $v:K\longrightarrow\Gamma$ is the valuation and $res:K\longrightarrow k$ the residue map), will denote the structure of a valued field according to the different properties we intend to analyze.
\end{itemize}
We may use a subscript to specify the valuation or the field we are working with.
The language of valued fields is the three sorted language $$\mathfrak{L}_{vf}=(\mathfrak{L}_{Ring},\mathfrak{L}_{oag},\mathfrak{L}_{ring},v,res),$$ where $\mathfrak{L}_{Ring}=\{+,\cdot,0,1\}$ is the language for the valued field sort, $\mathfrak{L}_{oag}=\{+,0,\leq\}$ for the value group sort, $\mathfrak{L}_{ring}=\{+,\cdot,0,1\}$ for the residue field sort, and $v,res$ are two function symbols interpreted respectively as the valuation $v:K\longrightarrow\Gamma$ and the residue map $res:K\longrightarrow k$, where $res(x)=0$ for all $x\notin O_{K}$. 
Throughout the paper, if $\mathfrak{L}$ is a language and $\mathcal{M}$ is an $\mathfrak{L}$-structure, with abuse of notation, we will usually identify a predicate $A\in\mathfrak{L}$ with its interpretation $A^{\mathcal{M}}$ in $\mathcal{M}$.
We say that a valued field $(K,k,\Gamma,v,res)$ is of equicharacteristic $0$ (or characteristic $(0,0)\,$) if $char(K)=char(k)=0$. We say that it is of mixed characteristic (or characteristic $(0,p)\,$) if $char(K)=0$ and $char(k)=p$. The equicharacteristic $p$ case ($(p,p)$) will not be addressed in this paper.
We say that a valued field $(K,v)$ is discrete if the order defined on its value group $G$ is discrete, i.e. $G$ has a minimal positive element. 
We will often use the abbreviation "oag" in referring to an ordered abelian group.

Let $G, G'$ be two arbitrary groups. We write $G\subseteq_{fg}G'$ if $G$ is a finitely generated subgroup of $G'$. %Let $\mathfrak{L}$ be a language and $\mathcal{M}\subseteq\mathcal{N}$ two $\mathfrak{L}$-structures. Let $A\subset M$. We write $\mathcal{M}\equiv_{A}\mathcal{N}$ if for all $\mathfrak{L}(A)$-sentences $\phi$, $\mathcal{M}\models\phi$ if and only if $\mathcal{N}\models\phi$. Finally, we use the abbreviation ``el. map" for ``elementary map" when talking about elementary maps between structures in a certain language.

\section{Ordered abelian groups with finite spines} \label{OAGfs}
%\subsection{Ordered abelian groups with finite spines} \label{OAGfs}
The model theory of ordered abelian groups has been studied for the last 50 years. Significant results have been achieved on (relative) quantifier elimination, first by Schmitt in \cite{Schmitt82}, and more recently by Cluckers and Halupczok in \cite{CH11} with a different though equivalent language. The case of oags with finite spines, or equivalently oags with finite $n$-regular rank, has been considered by Halevi and Hasson in \cite{HH18}, by Dolich and Goodrick in \cite{DG18} and by Farré in \cite{F17}. In this section, we recall the definition of oags with finite spines and give a language for model completeness. \\
\begin{defn} Let $G$ be an ordered abelian group. For each $n\in\mathbb{N}$, we recall the following notions:
\begin{itemize}
    \item for $a\in G\setminus nG$, let $H_{a}$ be the largest convex subgroup of $G$ such that $a\notin H_{a}+nG$; set $H_{a}=0$ if $a\in nG$. Define $S_{n}:=G/{\sim}$, where $a\sim a^{'}$ iff $H_{a}=H_{a^{'}}$ and let $s_{n}:G\longrightarrow S_{n}$ be the canonical projection. For $\alpha=s_{n}(a)\in S_{n}$, define $\overline{H}_{\alpha}:=H_{a}$;\\
    \item for $b\in G$, set $H_{b}^{'}:=\bigcup_{\alpha\in S_{n}, b\notin \overline{H}_{\alpha}}\overline{H}_{\alpha}$, where the union over the empty set is 0. 
    Define $T_{n}:=G/{\dot{\sim}}$, where $b\,\,\dot{\sim}\,\,b^{'}$ iff $H'_{b}=H'_{b^{'}}$ and let $t_{n}:G\longrightarrow T_{n}$ be the canonical projection. For $\alpha=t_{n}(b)\in T_{n}$, define $\overline{H}_{\alpha}:=H_{b}^{'}$;\\
    \item for $\beta\in T_{n}$, set $\overline{H}_{\beta^{+}}=\bigcap_{\substack{\alpha\in S_{n} \\ \overline{H}_{\beta}\subset \overline{H}_{\alpha}}}\overline{H}_{\alpha}$, where the intersection over the empty set is $G$. Here, $\beta^{+}$ is viewed as an element of a copy of $T_{n}$, denoted by $T_{n}^{+}$;
    \item define a total preorder on $\bigcup_{n\in\mathbb{N}}S_{n}\cup T_{n}\cup T_{n}^{+}$ by $\alpha\leq\alpha^{'}$ iff $\overline{H}_{\alpha}\subseteq \overline{H}_{\alpha^{'}}$.
\end{itemize}
For every $n\in\mathbb{N}$, the structure $(S_{n}\cup T_{n} \cup T_{n}^{+},\leq)$ is the \emph{$n$-spine} of $G$.
\end{defn}
%Let $G$ be an oag. For each positive integer $n$ we recall the definition of the spine $S_{n}$: 
% \begin{defn} For $n\in\mathbb{N}$ and $a\in G\setminus nG$, let $H_{a}$ be the largest convex subgroup of $G$ such that $a\notin H_{a}+nG$; set $H_{a}=0$ if $a\in nG$. Define $S_{n}:=G/\sim$, with $a\sim a^{'}$ iff $H_{a}=H_{a^{'}}$ and let $s_{n}:G\longrightarrow S_{n}$ be the canonical projection. For $\alpha=s_{n}(a)\in S_{n}$, define $\overline{H}_{\alpha}:=H_{a}$. Since the system of convex subgroups of an ordered abelian group are linearly ordered, $S_{n}$ is a set linearly ordered by $\alpha\leq \alpha'$ if $\overline{H}_{\alpha}\subseteq\overline{H}_{\alpha'}$. The structure $(S_{n},<)$ is called the $n$-$spine$ of $G$. \end{defn}

We say that $G$ has \textit{finite spines} if $|S_{n}|$ is finite for each $n\in\mathbb{N}$.
\begin{rem} If $G$ has finite spines, then for every $n\in\mathbb{N}$ the sorts $T_{n}$, $T_{n}^{+}$ do not add any contribution in terms of new convex subgroups.\end{rem}
By \cite[Lemma 2.1]{CH11}, all the $\overline{H}_{\alpha}$ are definable in $\mathfrak{L}_{oag}$ and the ordered structure $(S_{n},<)$ is interpretable in $G$ for every $n\in\mathbb{N}$. Moreover, if $G$ is a group with finite spines, then $$\left\{\overline{H}_{\alpha}\vert\,\alpha\in S_{n},\,n\in\mathbb{N}\right\}$$ are all the definable convex subgroups of $G$ \cite[Proposition 3.3]{HH18}. In particular, $G$ has only finitely or countably many definable convex subgroups. From now on, we fix an enumeration $(H_{i})_{i\in I}$, $I\subseteq\mathbb{N}$, of all the definable convex subgroups of $G$. By convention $H_{0}=\{0\}$.

Oags with finite spines admit quantifier elimination in a language that is simpler than the language in which arbitrary oags admit elimination of quantifiers. In particular, we recall the following proposition from \cite{HH18} obtained by a reduction of Cluckers-Halupczok language (\cite{CH11}) to the case of groups with finite spines.

\begin{prop}[{ \cite[Proposition 3.4]{HH18}}] \label{QEFS} Let $G$ be an ordered abelian group with finite spines and let $\{H_{i}\}_{i\in I}$ be an enumeration of the definable convex subgroups of $G$ for some finite or countable set of indeces $I$. Then the definable expansion of $G$ $$(G,0,+,-,\leq, x=_{H_{i}}y+j1_{i}, \,x\equiv_{m,H_{i}}y+j1_{i})_{j\in\mathbb{Z},\,i\in I,\,m\in\mathbb{N}},$$admits quantifier elimination, where \begin{itemize} \item $j1_{i}$ is $j$ times a representative $1_{i}\in G$ of the minimal positive element of the quotient $G/H_{i}$ if it exists, $0$ otherwise; \item for all $x,y\in G, x=_{H_{i}}y+j1_{i}$ if and only if $x-y-j1_{i}\in H_{i}$; \item for all $x,y\in G, x\equiv_{m,H_{i}}y+j1_{i}$ if and only if there exists $t\in G$ such that $x-y-mt-j1_{i}\in H_{i}$.\end{itemize}\end{prop}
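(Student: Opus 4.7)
The strategy is to bootstrap from the Cluckers--Halupczok quantifier elimination theorem in \cite{CH11}, which gives QE for an arbitrary ordered abelian group in a multi-sorted language carrying auxiliary sorts for the spines $S_{n}, T_{n}, T_{n}^{+}$ together with families of constants and binary relations indexed by elements of those sorts. The finite spines hypothesis will collapse the spine-indexed data to the countable enumeration $\{H_{i}\}_{i\in I}$, so the CH signature can be repackaged as the one-sorted expansion described in the statement.

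First I would recall the relevant part of the CH language: for each $n\in\mathbb{N}$ and each $\alpha$ in a spine, a constant $1_{\alpha}$ denoting a representative of the minimal positive element of $G/\overline{H}_{\alpha}$ (or $0$ if no such element exists), binary relations $x=_{\alpha}y+j1_{\alpha}$ and $x\equiv_{m,\alpha}y+j1_{\alpha}$ interpreted modulo $\overline{H}_{\alpha}$, along with the sorts $S_{n}, T_{n}, T_{n}^{+}$ themselves as quantification domains. Applying CH to an $\mathfrak{L}_{oag}$-formula $\varphi(\bar{x})$ then yields an equivalent quantifier-free CH-formula $\psi(\bar{x})$, possibly involving quantifiers ranging over the auxiliary sorts.

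Next I would translate $\psi$ into the target language. By \cite[Proposition 3.3]{HH18}, the finite spines hypothesis forces every $\overline{H}_{\alpha}$ to coincide with some $H_{i}$ from the enumeration, so each atomic CH-formula $x=_{\alpha}y+j1_{\alpha}$ or $x\equiv_{m,\alpha}y+j1_{\alpha}$ is equivalent to $x=_{H_{i}}y+j1_{i}$ or $x\equiv_{m,H_{i}}y+j1_{i}$ in the target signature. Since each $S_{n}$ is finite under the hypothesis, any quantification over $S_{n}$ inside $\psi$ unfolds into a finite Boolean combination of such atomic formulas; the sorts $T_{n}$ and $T_{n}^{+}$ add no further convex subgroups by the preceding remark, so they reduce to the same family of symbols.

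The main technical obstacle will be performing this translation uniformly when $\psi$ contains nested quantification over the auxiliary sorts together with parameters $j\in\mathbb{Z}$ and $m\in\mathbb{N}$ that a priori depend on those quantified variables. The plan is to dispose of the auxiliary-sort quantifiers by case distinction over the finite enumeration of $S_{n}$, and to observe that each occurrence of $1_{\alpha}$ and of the congruence depth $m$ is then attached to a fixed index $i\in I$, so that the parameters $j$ and $m$ can be fixed integers in each case. The result is a quantifier-free formula in the one-sorted language $(G,0,+,-,\leq, =_{H_{i}}\!\cdot +j1_{i}, \equiv_{m,H_{i}}\!\cdot +j1_{i})$; once soundness on atoms and compatibility with Boolean connectives is checked, QE in the target language follows.
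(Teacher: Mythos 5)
The paper does not supply a proof of this proposition: it cites it directly as \cite[Proposition 3.4]{HH18} and, in the surrounding text, describes it as ``obtained by a reduction of Cluckers--Halupczok language to the case of groups with finite spines.'' Your proposal reconstructs exactly that reduction --- using \cite{CH11} for the multi-sorted QE, \cite[Proposition 3.3]{HH18} to identify the $\overline{H}_{\alpha}$ with the countable enumeration $\{H_{i}\}_{i\in I}$, finiteness of each $S_{n}$ to eliminate auxiliary-sort quantifiers, and the remark that $T_{n},T_{n}^{+}$ contribute no new convex subgroups --- so it is the same route the paper points to, just spelled out.
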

%\begin{proof} We use the notation of the proof of \cite[Theorem 1.8]{CH11}. This proposition is a direct consequence of the main theorem of \cite{CH11}. The auxiliary sorts $\mathcal{T}_{n}$ and $\mathcal{T}_{n}^{+}$ do not add any new convex subgroups because they are unions or intersection of convex subgroups coming from $S_{n}$, which is finite. Also the ternary relation given by $x\equiv_{m,\alpha}^{[m']}y$ if and only if $x-y\in G_{\alpha}^{[m']}+mG$ is not needed, since by \cite[Lemma 2.4]{CH11}, and the finiteness of $S_{n}$, $G_{\alpha}^{[n]}=G_{\alpha'}+nG$ for some $\alpha'\in S_{n}$.  CAMBIA URGENTE QUESTO ERA UGUALE AD HALEVI HASSON\end{proof}

We are now in a position to identify a language in which the theory of an oag with finite spines $G$ is model complete.
\begin{prop} \label{PropMCFS} Let $G$ be an ordered abelian group with finite spines and let $\{H_{i}\}_{i\in I}$ be an enumeration of the definable convex subgroups. Then $Th(G)$ is model complete in the language $\mathfrak{L}_{oag}^{*}=\{0,+,-,\leq,(j1_{i}+H_{i})_{i\in I,j=0,1}\}$, where $j1_{i}$ is $j$ times a representative $1_{i}\in G$ of the minimal positive element of the quotient $G/H_{i}$ if it exists, $0$ otherwise.\end{prop}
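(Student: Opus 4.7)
The plan is to deduce model completeness from the quantifier elimination in Proposition \ref{QEFS} by showing that every atomic formula of the QE-language $\mathfrak{L}^{QE}$, together with its negation, is equivalent modulo $Th(G)$ to an existential formula in $\mathfrak{L}_{oag}^*$. Since existential formulas are closed under finite conjunction and disjunction, every Boolean combination of $\mathfrak{L}^{QE}$-literals will then be $\Sigma_1$ in $\mathfrak{L}_{oag}^*$, and QE upgrades this to every $\mathfrak{L}^{QE}$-formula, completing the proof via Robinson's test.

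For the coset predicate $x =_{H_i} y + j 1_i$ (i.e., $x - y \in j 1_i + H_i$), the cases $j \in \{0,1\}$ are atomic in $\mathfrak{L}_{oag}^*$. For $j \geq 2$, I would give two equivalent translations: an existential one,
$$\exists z_1, \dots, z_j \, \bigl( \bigwedge_{k} (z_k \in 1_i + H_i) \;\wedge\; x - y = z_1 + \dots + z_j \bigr),$$
and a universal one,
$$\forall z_1, \dots, z_{j-1} \, \bigl( \bigwedge_{k} (z_k \in 1_i + H_i) \;\to\; x - y - (z_1 + \dots + z_{j-1}) \in 1_i + H_i \bigr).$$
The case $j<0$ reduces to $-j>0$ by interchanging $x$ and $y$. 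Hence both the $=_{H_i}$ predicates and their negations are $\Sigma_1$-expressible in $\mathfrak{L}_{oag}^*$.

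The main obstacle is the congruence predicate $x \equiv_{m, H_i} y + j 1_i$, abbreviating $x - y - j 1_i \in m G + H_i$; the existential form is immediate from the definition, namely $\exists t\, \exists h\, (h \in H_i \wedge x - y - j 1_i = mt + h)$. Producing an existential form for its negation is the delicate step, and I would address it by exploiting a structural fact that should be extractable from the spine analysis of \cite{CH11} and \cite{HH18}: for an oag $G$ with finite spines, the quotient $G/(mG + H_i)$ is finite, and its cosets admit representatives that are finite $\mathbb{Z}$-linear combinations of the $1_{i'}$'s, $i' \in I$. Granting this, the negation of $x \equiv_{m, H_i} y + j 1_i$ can be written as the finite disjunction over nonzero coset representatives $c$ of the existential formulas $x - y - j 1_i - c \in mG + H_i$, each of which is $\Sigma_1$ in $\mathfrak{L}_{oag}^*$ by the treatment of the coset predicates above.

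Verifying this quotient structure is, I expect, the technical core of the proof; once it is in hand, all atomic $\mathfrak{L}^{QE}$-formulas and their negations are $\Sigma_1$ in $\mathfrak{L}_{oag}^*$, and the induction sketched in the first paragraph yields model completeness of $Th(G)$ in $\mathfrak{L}_{oag}^*$.
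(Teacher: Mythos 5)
Your overall strategy --- deduce model completeness from Proposition~\ref{QEFS} by showing each primitive relation of the QE-language and its negation is $\Sigma_1$ over $\mathfrak{L}_{oag}^*$ --- is exactly the route the paper takes, and your treatments of $x =_{H_i} y + j1_i$ and of the positive congruence predicate match the paper's. The divergence is in $\neg(x\equiv_{m,H_i} y+j1_i)$: the paper covers the complement of $j1_i + mG + H_i$ by cosets of the single form $l1_i + H_i + mG$ with $l\not\equiv j\pmod{m}$, implicitly relying on the image of $1_i$ generating $G/(mG+H_i)$; you instead propose coset representatives that are $\mathbb{Z}$-linear combinations of all the $1_{i'}$, $i'\in I$, and you rightly flag the needed quotient structure as the crux.

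There is, however, a gap in your step ``each of which is $\Sigma_1$ in $\mathfrak{L}_{oag}^*$.'' The elements $1_{i'}$ are not constants of $\mathfrak{L}_{oag}^*$; only the cosets $j1_{i'}+H_{i'}$ are available as predicates. Translating ``$x - y - j1_i - \sum_\ell c_\ell 1_{i_\ell} \in mG + H_i$'' into the language forces you to existentially quantify auxiliary $z_\ell$ with $z_\ell\in c_\ell 1_{i_\ell}+H_{i_\ell}$, which pins $z_\ell$ down only modulo $H_{i_\ell}$; the resulting formula therefore defines membership in $j1_i + c + mG + H_i + \sum_\ell H_{i_\ell}$, which is strictly larger than the intended coset whenever some $H_{i_\ell}\supsetneq H_i$. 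You would need either to argue that this slack stays inside the complement, or --- more robustly --- to replace the single target $mG+H_i$ by a stratified disjunction $\bigvee_{i'\ge i}\bigvee_{l}\exists t\,(x-y-mt\in l1_{i'}+H_{i'})$, whose disjuncts are literal $\mathfrak{L}_{oag}^*$-formulas and whose union is matched against the complement of $j1_i+mG+H_i$ using the spine structure. As written, the ``technical core'' you defer contains two distinct points --- the finiteness/generation claim for $G/(mG+H_i)$ and this translation issue --- and both need explicit argument before the proof closes.
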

\begin{proof}  From Proposition \ref{QEFS}, we replace the relations $x=_{H_{i}}y+j1_{i}$ and $x\equiv_{m,H_{i}}y+j1_{i}$ by the predicates $j1_{i}+H_{i}$. Note that, for $j=0$, $H_{i}$ is a predicate of the language, and for $j=1$, $1_{i}+H_{i}$ is a predicate for the minimal positive element of the quotient if it exists. So, for any $x,y\in G$ and $j\in\mathbb{Z}$, we have $x=_{H_{i}}y+j1_{i}$ if and only if $x-y\in j1_{i}+H_{i}$, that is \begin{align*}x=_{H_{i}}y+j1_{i} &\Longleftrightarrow \exists z\,\,(z=x-y\,\wedge z\in j1_{i}+H_{i})\end{align*} and \begin{align*} x=_{H_{i}}y+j1_{i}&\Longleftrightarrow \forall z\,\,\left(z\in j1_{i}+H_{i}\rightarrow x-y-z\in H_{i}\right).\end{align*} Moreover, we have $x\equiv_{m,H_{i}}y+j1_{i}$ if and only if  $x-y-mt\in j1_{i}+H_{i}$ for some $t\in G$, that is \begin{align*} x\equiv_{m,H_{i}}y+j1_{i}&\Longleftrightarrow \exists z,t\,\,(z=x-y-mt\,\wedge z\in j1_{i}+H_{i})\end{align*} and \begin{align*} x\not\equiv_{m,H_{i}}y+j1_{i}&\Longleftrightarrow \exists t\,\,(x-y-mt\in H_{i}\,\vee \\ &\,\,\,\,\,\,\,\vee\,x-y-mt\in (1+j)1_{i}+H_{i}\,\vee\, \\ &\,\,\,\,\,\,\,\vee x-y-mt\in(2+j)1_{i}+H_{i}\,\vee\ldots \\ &\ldots\vee\,x-y-mt\in(m-1+j)1_{i}+H_{i}).\end{align*} So the relations are existentially and universally definable, and the theory of $G$ is model complete in $\mathfrak{L}_{oag}$ together with the predicates $j1_{i}+H_{i}$, where $i\in I$ and $j\in\mathbb{Z}$. \\
Also, we note that %$$u\in kc_{i}+H_{i}\iff\exists r\in c_{i}+H_{i}(u=\underbrace{r+\ldots+r}_{k\;times}).$$
$$u\in j1_{i}+H_{i}\iff\exists r_{1},\ldots,r_{j}\in 1_{i}+H_{i}(u=r_{1}+\ldots+r_{j})$$ and $$u\in j1_{i}+H_{i}\iff\forall r_{1},\ldots,r_{j}\in 1_{i}+H_{i},\,\,r_{1}+\ldots+r_{j}-u\in H_{i}.$$  So, for each $j\in\mathbb{Z}$, the predicate $j1_{i}+H_{i}$ is existentially and universally definable using $1_{i}+H_{i}$ and $H_{i}$. Hence, for every $i\in I$, it suffices to add the predicates $j1_{i}+H_{i}$ for $j=0,1$ to $\mathfrak{L}_{oag}$. \end{proof}

In the case that the order of the group is discrete, we have the following result.
\begin{lem} \label{LemmaModelCompleteQuotientFiniteSpines} Let $\Gamma$ be a discrete oag with finite spines, $(H_{i})_{i\in I}$ an enumeration of its definable convex subgroups, and consider the language $\mathfrak{L}_{oag}^{*}=\{0,+,-,\leq,(j1_{i}+H_{i})_{i\in I,j=0,1}\}$ in which $Th(\Gamma)$ is model complete. If $\Delta$ is its minimal convex subgroup, then $Th(\Gamma/\Delta)$ is model complete in $\mathfrak{L}_{oag}^{*}$.\end{lem}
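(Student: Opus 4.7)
The plan is to apply Proposition \ref{PropMCFS} directly to $\Gamma/\Delta$ and then verify that the language $\mathfrak{L}_{oag}^{*}$ (built from the data of $\Gamma$) provides the same definable structure on $\Gamma/\Delta$ as the model-complete language that proposition produces for the quotient itself.

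First I would check that $\Gamma/\Delta$ is an oag with finite spines. Since convex subgroups of an oag are totally ordered by inclusion and $\Delta$ is the minimal nonzero one, every nonzero convex subgroup of $\Gamma$ contains $\Delta$, so the definable convex subgroups of $\Gamma/\Delta$ are exactly the $H_i/\Delta$ for $H_i\supseteq\Delta$. A direct inspection of the $n$-regular data yields $|S_n(\Gamma/\Delta)|\le|S_n(\Gamma)|$, because for $\bar a\notin n(\Gamma/\Delta)$ the associated $H_a\subseteq\Gamma$ is nonzero (otherwise $a\in n\Gamma+\Delta$), hence contains $\Delta$, and $H_{\bar a}=H_a/\Delta$. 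Applying Proposition \ref{PropMCFS} to $\Gamma/\Delta$, its theory is model complete in the language
$$\mathfrak{L}^{**}=\{0,+,-,\leq,(j\tilde{1}_{i}+H_{i}/\Delta)_{H_{i}\supseteq\Delta,\,j=0,1}\},$$
where $\tilde{1}_{i}$ is a representative in $\Gamma/\Delta$ of the minimal positive element of $(\Gamma/\Delta)/(H_{i}/\Delta)\cong\Gamma/H_{i}$ if it exists, and $0$ otherwise.

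Next I would show that $\mathfrak{L}_{oag}^{*}$ and $\mathfrak{L}^{**}$ define the same relations on $\Gamma/\Delta$. For $H_i\supseteq\Delta$, the representative $1_i$ projects to $\tilde{1}_i$, so the $\mathfrak{L}_{oag}^{*}$-predicate $j1_i+H_i$ coincides on $\Gamma/\Delta$ with the $\mathfrak{L}^{**}$-predicate $j\tilde{1}_i+H_i/\Delta$. The only remaining $\mathfrak{L}_{oag}^{*}$-predicates index $H_0=\{0\}$ and $H_1=\Delta$: both convex subgroups and $1_0=1$ collapse to $0$ in the quotient, so $H_0$, $1_0+H_0$ and $H_1$ are all $\{0\}$ on $\Gamma/\Delta$ (quantifier-free definable by $x=0$), while $1_1+H_1=1_{\Delta}+\Delta$ projects to the singleton of the minimal positive element of $\Gamma/\Delta$ (if it exists)~— which is precisely the $\mathfrak{L}^{**}$-predicate attached to the trivial convex subgroup of $\Gamma/\Delta$. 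Consequently any $\mathfrak{L}_{oag}^{*}$-embedding between models of $Th_{\mathfrak{L}_{oag}^{*}}(\Gamma/\Delta)$ is automatically an $\mathfrak{L}^{**}$-embedding; by Proposition \ref{PropMCFS} it is $\mathfrak{L}^{**}$-elementary, and since every $\mathfrak{L}_{oag}^{*}$-predicate is quantifier-free $\mathfrak{L}^{**}$-definable, it is also $\mathfrak{L}_{oag}^{*}$-elementary.

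The principal obstacle is the bookkeeping for the two ``exceptional'' indices $H_0=\{0\}$ and $H_1=\Delta$, whose predicates are no longer primitive after quotienting: one must verify that their interpretations on $\Gamma/\Delta$ either become trivial or match one of the $\mathfrak{L}^{**}$-predicates, so that the model completeness coming from the application of Proposition \ref{PropMCFS} to $\Gamma/\Delta$ can be transferred back to the ambient language $\mathfrak{L}_{oag}^{*}$.
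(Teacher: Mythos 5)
Your proposal is correct and follows essentially the same route as the paper's (two-line) proof: check that $\Gamma/\Delta$ again has finite spines, apply Proposition~\ref{PropMCFS} to it, and observe that the $\mathfrak{L}_{oag}^{*}$-predicates on $\Gamma/\Delta$ coincide (up to quantifier-free interdefinability) with the predicates that proposition produces intrinsically for the quotient. You supply the bookkeeping the paper leaves implicit — the spine bound $|S_n(\Gamma/\Delta)|\le|S_n(\Gamma)|$ and, in particular, what happens to the exceptional predicates indexed by $H_0=\{0\}$ and by $\Delta$ itself — which is a welcome clarification rather than a different argument.
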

\begin{proof}
It suffices to note that
$(1_{i}+H_{i})^{\Gamma/\Delta}=(1_{i}+\Delta)+H_{i}^{\Gamma}/\Delta$ and $H_{i}^{\Gamma/\Delta}=H_{i}^{\Gamma}/\Delta$. Thus, $\Gamma/\Delta$ is an oag with finite spines and is model complete in $\mathfrak{L}_{oag}^{*}$ by Proposition \ref{PropMCFS}.
\end{proof}

%\subsection{Some general results about ordered abelian groups}
To prove the main result in the case of finite spines (Theorem \ref{TheoremModelCompletenessFiniteRamification}), we need in particular the following general result on elementary embeddings of saturated oags with the smallest convex subgroup generated by the minimal positive element.
\begin{prop} \label{MCquotients}
Let $\Gamma,\Gamma^{'}$ $\aleph_{1}$-saturated discrete ordered abelian groups with minimal positive elements $1_{\Gamma}$ and $1_{\Gamma^{'}}$, respectively. Then $$\Gamma\preceq\Gamma^{'}\Longrightarrow\Gamma/\langle1_{\Gamma}\rangle_{conv}\preceq\Gamma^{'}/\langle1_{\Gamma^{'}}\rangle_{conv}$$
\end{prop}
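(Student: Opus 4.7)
The plan is to prove $\bar{\imath}\colon\Gamma/\Delta\to\Gamma'/\Delta'$ elementary by induction on formula complexity, reducing quotient-atomic formulas to countable disjunctions in the ambient group and using $\aleph_{1}$-saturation to transport witnesses across $i$. First, I would observe that the minimal positive element of a discrete oag is $0$-definable, so any elementary embedding $i\colon\Gamma\to\Gamma'$ must send $1_\Gamma$ to $1_{\Gamma'}$; discreteness forces $\Delta=\mathbb{Z}\cdot 1_\Gamma$ (there are no elements strictly between consecutive multiples of $1_\Gamma$) and likewise $\Delta'=\mathbb{Z}\cdot 1_{\Gamma'}$. The predicate ``$a\in\Delta$'' is the countable disjunction $\bigvee_{n\in\mathbb{N}}|a|\leq n\cdot 1_\Gamma$, whose truth value is transferred in both directions by $i$, so $\bar{\imath}$ is a well-defined injective $\mathfrak{L}_{oag}$-embedding on the quotients.

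The core is the following strengthening, proved by induction on the $\mathfrak{L}_{oag}$-formula $\phi(\bar x)$: for any $\bar a\in\Gamma^{n}$ and $\bar b\in(\Gamma')^{n}$ satisfying $\mathrm{tp}^{\Gamma}(\bar a,1_\Gamma)=\mathrm{tp}^{\Gamma'}(\bar b,1_{\Gamma'})$, one has $\Gamma/\Delta\models\phi([\bar a])$ iff $\Gamma'/\Delta'\models\phi([\bar b])$. Specialising $\bar b=i(\bar a)$, where the type equality is automatic since $i$ is elementary, recovers the proposition. For atomic $\phi$, each quotient atom ``$t([\bar a])=0$'' or ``$t([\bar a])\leq 0$'' rewrites as a countable disjunction $\bigvee_{n}\theta_{n}(\bar a,1_\Gamma)$ of first-order formulas in the parameters $(\bar a,1_\Gamma)$; equality of first-order types over $1$ forces each disjunct, and hence the whole disjunction, to agree in $\Gamma$ and $\Gamma'$. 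Boolean connectives propagate routinely.

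The quantifier step is where $\aleph_{1}$-saturation enters. For $\phi=\exists y\,\psi$, if $c\in\Gamma$ witnesses $\Gamma/\Delta\models\psi([c],[\bar a])$, then $\mathrm{tp}^{\Gamma}(c,\bar a,1_\Gamma)$ is finitely satisfiable over $(\bar b,1_{\Gamma'})$ in $\Gamma'$: every finite fragment becomes an $\exists y$-formula in $(\bar a,1_\Gamma)$ that transfers across the assumed type equality. Countability of $\mathfrak{L}_{oag}$ together with $\aleph_{1}$-saturation of $\Gamma'$ then produce $c'\in\Gamma'$ realising this type, and the induction hypothesis applied to $\psi$ on the enlarged tuples $(c,\bar a)$ and $(c',\bar b)$ yields $\Gamma'/\Delta'\models\psi([c'],[\bar b])$, hence $\Gamma'/\Delta'\models\exists y\,\psi(y,[\bar b])$. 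The opposite direction is symmetric, using $\aleph_{1}$-saturation of $\Gamma$.

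The delicate point is the formulation of the induction hypothesis: one cannot induct on quotient-formulas with quotient-parameters directly, but must carry the first-order type of the \emph{ambient} parameters over $1$ as the preserved invariant, because quotient-atomic relations are genuinely infinitary (countable Boolean combinations) in the ambient group. The countability of $\mathfrak{L}_{oag}$ matches the $\aleph_{1}$-saturation hypothesis precisely; neither discreteness (which pins down $\Delta$ as $\mathbb{Z}\cdot 1_\Gamma$), nor saturation (which closes the quantifier step), can be dropped.
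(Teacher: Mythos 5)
Your proof is correct and is, at bottom, the same argument as the paper's: both rely on the fact that $1_{\Gamma}$ is $0$-definable (hence preserved), that discreteness makes $\Delta=\mathbb{Z}\cdot 1_{\Gamma}$, and on $\omega$-saturation (via $\aleph_{1}$-saturation) to transfer quantifier witnesses. The only difference is packaging — the paper builds an explicit back-and-forth system of partial elementary maps between $\Gamma$ and $\Gamma'$ fixing the lifted parameters and pushes it down to the quotients, whereas you unroll that back-and-forth into a direct induction on formulas with the invariant $\mathrm{tp}^{\Gamma}(\bar a,1_{\Gamma})=\mathrm{tp}^{\Gamma'}(\bar b,1_{\Gamma'})$; these are two standard phrasings of the same idea.
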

\begin{proof}
%First of all, note that since $\Gamma\preceq\Gamma^{'}$ and there is a formula $\phi(\bar{x})$ such that $\phi(\Gamma)=\Delta$ and $\phi(\Gamma^{'})=\Delta^{'}$, then also $\Delta\preceq\Delta^{'}$.
Let $\Delta:=\langle1_{\Gamma}\rangle_{conv}$ and $\Delta^{'}:=\langle1_{\Gamma^{'}}\rangle_{conv}$.
Let $a_{1},\ldots,a_{n}\in\Gamma/\Delta$, then there are $b_{1},\ldots,b_{n}\in\Gamma$ such that $a_{i}=b_{i}+\Delta$ for $i=1,\ldots,n$. Set $A=\{a_{1},\dots,a_{n}\}$ and $B=\{b_{1}\,\dots,b_{n}\}$. Since $\Gamma\preceq\Gamma^{'}$ and they are $\aleph_{1}$-saturated, then $\Gamma\equiv_{B}\Gamma^{'}$, i.e. there exists a back-and-forth system of partial elementary maps $$I=\left\{f: C\longrightarrow D\,\vert\,\, C\subseteq_{fg}\Gamma, D\subseteq_{fg}\Gamma^{'}, B\subseteq C,f(b_{i})=b_{i}\right\}.$$ We want to show that $$\Gamma/\Delta\equiv_{A}\Gamma^{'}/\Delta^{'}.$$ 
Consider $$I^{'}=\left\{f_{\Delta}\,\vert\,f\in I\right\},$$ where $f_{\Delta}:(C+\Delta)/\Delta\longrightarrow (D+\Delta^{'})/\Delta^{'}$ is the map such that $(f_{\Delta})(c+\Delta)=f(c)+\Delta^{'}$.
\\
First of all, we show that this map is well defined. Notice that, since $\Gamma\preceq\Gamma^{'}$, then every partial elementary map $f\in I$ we have $f(1_{\Gamma})=1_{\Gamma^{'}}$. Thus, if $c+\Delta=d+\Delta\in\Gamma/\Delta$, then the following equivalences hold $$c-d\in\Delta\iff c-d=n1_{\Gamma}\iff f(c-d)=nf(1_{\Gamma})\iff f(c)-f(d)\in\Delta^{'},$$ which implies that $f_{\Delta}\in I^{'}$ is well defined.
Thus, $I^{'}$ is a back-and-forth system of elementary maps among finitely generated substructures of $\Gamma/\Delta$ and $\Gamma^{'}/\Delta^{'}$. Indeed, if $c+\Delta\in\Gamma/\Delta$ and  $c\notin (C+\Delta)/\Delta$, there is $$g^{'}\in I^{'}:(C+\Delta)/\Delta\cup\{c+\Delta\}\longrightarrow (D+\Delta^{'})/\Delta^{'}\cup\{g(c)+\Delta^{'}\},$$ where $g\in I$ is the extension of $f$ to $C\cup \{c\}$ and thus $g^{'}=g_{\Delta}$.
So $I^{'}$ is a back and forth system that fixes $a_{1},\ldots,a_{n}$,
and so $\Gamma/\Delta\equiv_{A}\Gamma^{'}/\Delta^{'}$.
Since $a_{1},\ldots,a_{n}$ have been chosen arbitrarily, we have $\Gamma/\Delta\preceq\Gamma^{'}/\Delta^{'}$.

 \end{proof}

Note that in the case of the latter proposition, the equivalence relation defined by the quotient is not definable without parameters. Indeed, otherwise, the thesis would have been immediate.

\section{Preliminaries on Henselian valued fields} \label{hvf}
We recall the notion of coarsening, a commonly used reduction method for valued fields.
We refer to \cite{EP05} and \cite{Cetraro2012_VDD_VF}.
Let $K$ be a valued field with value group $\Gamma_{K}$, valuation ring $O_{K}$, maximal ideal $M_{K}$, and residue field $k$.
In this section, we assume that the valued field $K$ has mixed characteristic $(0,p)$. We take the smallest convex subgroup $\Delta$ of $\Gamma_{K}$ containing $v(p)$ and consider the quotient $\Gamma_{K}/\Delta$. By the convexity of $\Delta$, this is also an ordered abelian group with the order induced by the order of $\Gamma_{K}$. The field $K$ carries a valuation which is the composition of $v$ with the canonical surjection $\pi:\Gamma_{K}\longrightarrow\Gamma_{K}/\Delta$. We denote this valuation by $\dot{v}:K\longrightarrow\Gamma_{K}/\Delta\cup\{\infty\}$, and we will refer to it as the \textit{coarse valuation} corresponding to $v$. The valuation ring associated to $\dot{v}$ is the set $$\{x\in K\,\vert\,\exists\delta\in\Delta\,v(x)\geq\delta\}$$ and its maximal ideal is $$\{x\in K\,\vert\,\forall\delta\in\Delta\,v(x)>\delta\},$$ which is contained in $\mathcal{M}_{K}$. We denote its residue field by $\mathring{K}$, and we call it the \textit{core field of} $K$ with respect to the valuation $v$, using the notation of Prestel and Roquette in \cite{PR84}. Note that $\mathring{K}$ has characteristic zero. We denote the valued field $(K,\dot{v},\mathring{K},\Gamma/\Delta)$ by $\dot{K}$. The core field has a valuation $\mathring{v}$ over the value group $\Delta$, defined by $\mathring{v}(x+ \mathcal{M}_{\dot{K}})=v(x)$. Its valuation ring is $O_{K}/\mathcal{M}_{\dot{K}}$ and its maximal ideal is $\mathcal{M}_{K}/\mathcal{M}_{\dot{K}}$.$\\$ We thus have the following picture:

\vspace{2 mm}
\begin{tikzpicture}
  \matrix (m) [matrix of math nodes,row sep=3em,column sep=4em,minimum width=2em]
  {
     K & \Gamma & \Gamma/\Delta \\
     \mathring{K} & \Delta \\
     k \\};
  \path[-stealth]
    (m-1-1) edge node [left] {$\dot{res}$} (m-2-1)
            edge node [above] {$v$} (m-1-2)
	edge [bend left] node [above] {$\dot{v}$} (m-1-3) 
	edge [bend right=90] node [left] {$res$} (m-3-1)
    (m-2-1) edge node [left] {$\mathring{res}$} (m-3-1)
	edge node [above]{$\mathring{v}$} (m-2-2)
    (m-1-2) edge node [above] {} (m-1-3);
   
     %(m-1-3) edge [dashed,-] (m-2-1)%;
\end{tikzpicture}

\begin{defn} Let $(K,v,k,\Gamma)$ be a valued field of mixed characteristic $(0,p)$. We say that $K$ has \textit{finite ramification} $e$, if there is a natural number $e\geq1$ such that $e=\left\vert\left\{\gamma\in\Gamma: 0<\gamma\leq v(p)\right\}\right\vert$.  \end{defn}
\begin{lem}\cite[p. 27]{PR84} \label{ramificationcorefield} The ramification index of the core field $\mathring{K}$ with respect to $v_{0}$ and that of the field $K$ with respect to $v$ are the same. \end{lem}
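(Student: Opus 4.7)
The plan is to unpack the definitions and observe that the two ramification indices count elements of the same set. By definition, the ramification index of $(K,v)$ is
\[
e = \bigl|\{\gamma\in\Gamma : 0<\gamma\le v(p)\}\bigr|,
\]
while the ramification index of $(\mathring K,\mathring v)$ is
\[
\mathring e = \bigl|\{\delta\in\Delta : 0<\delta\le\mathring v(\bar p)\}\bigr|,
\]
where $\bar p=p+\mathcal{M}_{\dot K}\in\mathring K$. So the claim reduces to showing that these two sets coincide.

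First I would note that $\mathring v(\bar p)=v(p)$, which is immediate from the definition $\mathring v(x+\mathcal{M}_{\dot K})=v(x)$ together with the fact that $v(p)\in\Delta$ (since $\Delta$ is chosen as the smallest convex subgroup of $\Gamma$ containing $v(p)$, so $p\notin\mathcal{M}_{\dot K}$ and the reduction $\bar p$ is nonzero in $\mathring K$). Hence the upper bound in both sets is the same element, namely $v(p)$.

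Next I would invoke convexity: since $\Delta$ is a convex subgroup of $\Gamma$ containing both $0$ and $v(p)$, every $\gamma\in\Gamma$ with $0<\gamma\le v(p)$ automatically lies in $\Delta$. Therefore
\[
\{\gamma\in\Gamma : 0<\gamma\le v(p)\} = \{\delta\in\Delta : 0<\delta\le v(p)\},
\]
and in particular the two sets have the same cardinality. This yields $e=\mathring e$, which is the claimed equality of ramification indices.

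There is no real obstacle here; the statement is essentially an observation, and the only thing to be careful about is the identification $\mathring v(\bar p)=v(p)$ and the fact that it is the convexity (not just the subgroup property) of $\Delta$ that makes the inclusion of intervals into an equality. The whole argument is a one-line consequence of the construction of the coarsening, and I would present it in a few lines.
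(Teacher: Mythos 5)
Your argument is correct and is exactly the one-line observation the paper leaves to the cited reference (the paper itself gives no proof, deferring to Prestel--Roquette). The key points -- that $\mathring v(\bar p)=v(p)$ by construction of the core valuation, and that convexity of $\Delta$ (together with $v(p)\in\Delta$) forces the interval $\{\gamma\in\Gamma : 0<\gamma\le v(p)\}$ to lie entirely inside $\Delta$ -- are precisely what is needed, and you have stated them accurately.
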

We recall the next result from \cite{MD16}, %(\cite[Lemma 3]{MD16}), 
in which the authors show that Julia Robinson's formula (\cite{RJ65}) $$\exists y(1+px^{2}=y^{2}),$$ which defines the valuation ring of the field of $p$-adic numbers $\mathbb{Q}_{p}$ ($p\neq 2$), can be used to define the valuation ring in a more general context.
\begin{prop} \label{valdefn} Let $K$ be an henselian valued field of mixed characteristic $(0,p)$ and ramification index $e$. Let $n>e$ be an integer coprime with $p$, then \begin{itemize} \item the valuation ring is existentially definable by the formula $\exists y(1+px^{n}=y^{n})$; \item the maximal ideal is existentially definable by the formula $\exists y(1+\frac{1}{p}x^{n}=y^{n})$, thus the valuation ring is universally definable. \end{itemize} \end{prop}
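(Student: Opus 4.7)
Both equivalences follow by applying Hensel's lemma to the polynomial $P(Y) = Y^n - (1 + \star)$ at $Y = 1$, where $\star$ is either $p x^n$ or $x^n/p$. Crucially $v(P'(1)) = v(n) = 0$ since $\gcd(n,p) = 1$, so Hensel succeeds as soon as $v(P(1)) > 0$. The converses rely on the hypothesis $n > e$ via a simple counting argument inside $\Gamma$: for any $\gamma > 0$, the multiples $\gamma, 2\gamma, \ldots, n\gamma$ are $n$ distinct positive elements, and this list cannot be contained in the $e$-element set $\{\delta \in \Gamma : 0 < \delta \le v(p)\}$.

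\textbf{Forward directions.} For the first formula, assume $v(x) \ge 0$. Then $v(P(1)) = v(p) + n v(x) \ge v(p) > 0$, so Hensel produces $y \in 1 + M_K$ with $y^n = 1 + p x^n$. For the maximal ideal formula, assume $v(x) > 0$; the counting argument forces $n v(x) > v(p)$, hence $v(P(1)) = n v(x) - v(p) > 0$ and Hensel again yields $y$. The case $x = 0$ is handled by $y = 1$.

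\textbf{Converses (the main obstacle).} Suppose $y^n = 1 + p x^n$ and, for contradiction, $v(x) < 0$. The counting argument applied to $-v(x) > 0$ forces $-n v(x) > v(p)$, hence $v(p x^n) < 0$, and the ultrametric inequality gives $v(1 + p x^n) = v(p) + n v(x)$. Therefore $n v(y) = v(p) + n v(x)$, so $v(p) = n \gamma$ with $\gamma = v(y) - v(x) > 0$; but this is exactly the configuration the counting argument forbids. The converse for the maximal ideal formula is analogous: assuming $y^n = 1 + x^n/p$ and $v(x) \le 0$, one computes $v(1 + x^n/p) = n v(x) - v(p) < 0$, extracts $v(p) = n \gamma$ with $\gamma = v(x) - v(y) > 0$, and derives the same contradiction. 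This case analysis is the technical heart of the proof: in a general ordered abelian group one cannot divide, so the contradiction must be produced purely by listing multiples of a positive element and comparing with $v(p)$.

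\textbf{Universal definability of $O_K$.} With $M_K$ existentially defined by $\exists y\,(p + x^n = p y^n)$, its complement is universally defined. For $x \ne 0$ one has $x \in O_K \iff x^{-1} \notin M_K$; substituting $x^{-1}$ into the formula and clearing denominators by multiplying through by $x^n$ yields the universal formula $\forall y\,\bigl(p x^n (y^n - 1) \ne 1\bigr)$, which is also (vacuously) satisfied at $x = 0$ and therefore defines $O_K$ throughout $K$.
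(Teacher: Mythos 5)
The paper does not prove Proposition~\ref{valdefn}; it recalls it from \cite{MD16} without argument, so there is no internal proof to compare against. Your reconstruction is correct and follows the standard Hensel's-lemma approach one would expect: $v(n)=0$ because $\gcd(n,p)=1$ makes Hensel applicable once the constant term is a unit plus something of positive value, and the hypothesis $n>e$ enters exactly through the observation that $n\gamma>v(p)$ for every positive $\gamma\in\Gamma$ (otherwise $\gamma,2\gamma,\dots,n\gamma$ would all lie in the $e$-element set of positive values $\le v(p)$). Your converses and the clearing-of-denominators step for the universal definability of $O_K$ all check out, including the degenerate cases $x=0$.
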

%\begin{proof} Let $x\in O_{K}$ and consider the polynomial $f(y)=y^{n}-px^{n}-1\in O_{K}[y]$. Then $v(f(1))>2v(f^{'}(1))$ so $f(y)$ has a root by Hensel's lemma. On the other hand, suppose there exists $y\in K$ such that $y^{n}=px^{n}+1$. If $x\notin O_{K}$, i.e. $v(x)<0$, then $v(px^{n})<0$, so $v(y)<0$, hence $nv(y)=e+nv(x)$ and so $n(v(y)-v(x))=e$. That is, $n$ divides $e$ and this is a contradiction for the choice of $n$ being an integer greater than $e$.\end{proof}
The following lemma from \cite{MD16} was proved for henselian mixed characteristic valued fields with the same finite ramification $e\geq1$ and valued in a $\mathbb{Z}$-group. Here we recall the proof in the more general case of value groups with a minimal positive element.

\begin{lem}\label{extvf} Let $(K_{1},O_{1})$ and $(K_{2},O_{2})$ be henselian valued fields of mixed characteristic $(0,p)$, such that $K_{1}\subseteq K_{2}$. If they have the same ramification index $0<e<\infty$, then $$O_{K_{2}}\cap K_{1}=O_{K_{1}}.$$ \end{lem}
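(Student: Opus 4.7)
The plan is to lean entirely on Proposition \ref{valdefn}, which supplies a single ring-theoretic formula (depending only on the ramification $e$ and the prime $p$) that defines the valuation ring in any henselian mixed-characteristic valued field with ramification $e$. Since $K_1$ and $K_2$ share the same $e$ and the same $p$, the same formula defines $O_{K_1}$ inside $K_1$ and $O_{K_2}$ inside $K_2$. Turning the equality $O_{K_2}\cap K_1=O_{K_1}$ into two inclusions, each direction will follow from one of the two definability statements in Proposition \ref{valdefn}.

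First, fix an integer $n>e$ coprime to $p$, so that by Proposition \ref{valdefn} we have, for $i=1,2$,
\[
O_{K_i}=\{x\in K_i:\exists y\in K_i,\ 1+px^n=y^n\}.
\]
For the inclusion $O_{K_1}\subseteq O_{K_2}\cap K_1$: take $x\in O_{K_1}$, pick a witness $y\in K_1$ with $1+px^n=y^n$, and observe that $y\in K_1\subseteq K_2$ still witnesses $x\in O_{K_2}$; since $x\in K_1$, this shows $x\in O_{K_2}\cap K_1$. This step uses only that the defining formula is existential and that $K_1$ embeds into $K_2$ as a ring.

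For the reverse inclusion $O_{K_2}\cap K_1\subseteq O_{K_1}$, I would use the second clause of Proposition \ref{valdefn}, which says that the maximal ideal $M_{K_i}$ is existentially definable by $\exists y(1+\tfrac{1}{p}x^n=y^n)$. Consequently, for $x\neq 0$, the condition $x\notin O_{K_i}$ is equivalent to $1/x\in M_{K_i}$, which is itself an existential statement in $K_i$ (introduce a variable for $1/x$ and add the equation $x\cdot(1/x)=1$). Therefore $K_i\setminus O_{K_i}$ is existentially definable. Now suppose $x\in O_{K_2}\cap K_1$ but $x\notin O_{K_1}$; then $x\neq 0$ and there exist $z_1,z_2\in K_1$ with $xz_1=1$ and $1+\tfrac{1}{p}z_1^n=z_2^n$. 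These same $z_1,z_2$ lie in $K_2$ and witness $x\notin O_{K_2}$, a contradiction.

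I do not expect any serious obstacle: the whole argument is just the observation that a set which is both existentially and universally definable by a parameter-free ring formula behaves well under ring extensions. The only subtlety is checking that the formulas really are parameter-free (the prime $p$ is a definable constant) and that the integer $n>e$ coprime to $p$ can be chosen uniformly for both $K_1$ and $K_2$, which is immediate because the ramification index is the same for both fields.
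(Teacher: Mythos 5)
Your proof is correct and follows essentially the same strategy as the paper's: both arguments rest on the fact that Proposition \ref{valdefn} gives parameter-free existential formulas (depending only on $e$ and $p$, which are shared by $K_1$ and $K_2$) for both $O_{K_i}$ and $M_{K_i}$, and that existential formulas are preserved upward along ring embeddings. The paper re-derives the existential formula for $M_{K_i}$ from the formula for $O_{K_i}$ via the identity $M_{K_i}=\{x:x^e p^{-1}\in O_{K_i}\}$ and phrases the reverse inclusion through ``$\beta$ is a unit in $O_{K_2}$'', whereas you cite the maximal-ideal formula from Proposition \ref{valdefn} directly and conclude immediately from $1/x\in M_{K_2}\Rightarrow x\notin O_{K_2}$; but these are only cosmetic differences, and the key ideas are identical.
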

\begin{proof} Note that since both valued fields have a finite ramification index, their value groups have a minimal element $1$. For $i=1,2$, the maximal ideal is $$\mathcal{M}_{K_{i}}=\{x\in K:x^{e}p^{-1}\in O_{K_{i}}\}.$$ Indeed, since $v(p)=e$, $$x\in\mathcal M_{K_{i}}\,\,\text{if and only if}\,\,v(x^{e}p^{-1})=ev(x)-e=e(v(x)-1)\geq0,$$ and so if and only if $x^{e}p^{-1}\in O_{K_{i}}$. From this observation and the fact that the valuation rings $O_{K_{1}}$ and $O_{K_{2}}$ are definable by the same existential formula (it depends only on $e$ and $p$), we deduce that the maximal ideals $\mathcal{M}_{K_{1}}$ and $\mathcal{M}_{K_{2}}$ are both defined by the existential formula $\exists y(1+p(x^{e}p^{-1})^{n}=y^{n})$. It follows that %from the definability of the valuation rings by the same formula%
$O_{K_{1}}\subseteq O_{K_{2}}\cap K_{1}$. For the other inclusion suppose that there exists an element $\beta\in O_{K_{2}}\cap K_{1}$ and $\beta\notin O_{K_{1}}$. Then we have $\beta^{-1}\in O_{K_{1}}$, hence $\beta^{-1}\in O_{K_{2}}$, so $\beta$ is a unit in $O_{K_{2}}$. Since $\beta\notin O_{K_{1}}$, we have that $\beta^{-1}\in\mathcal{M}_{K_{1}}$, hence $\beta^{-1}\in\mathcal{M}_{K_{2}}$ and so we get a contradiction. %for $\beta$ being an unit in $O_{K_{2}}$%
\end{proof}

From the previous lemma, we get that $(K_{1},O_{1})\subseteq(K_{2},O_{2})$ is an extension of valued field. Hence, there is a natural inclusion of the residue field (resp. value group) of $K_{1}$ into the residue field (resp. value group) of $K_{2}$.

\subsection{AKE transfer principle}
We recall the well-known transfer principle due to Ax and Kochen \cite{AxK65} for equicharacteristic henselian valued fields.
\begin{thm}[Ax-Kochen, Ershov] \label{AKE} Let $K_{1},K_{2}$ be two equicharacteristic $0$ henselian valued fields in the language $\mathfrak{L}_{vf}$. Then \begin{itemize}
\item $K_{1}\equiv_{\mathfrak{L}_{vf}} K_{2}$ if and only if $\Gamma_{1}\equiv_{\mathfrak{L}_{oag}}\Gamma_{2}$ and $k_{1}\equiv_{\mathfrak{L}_{ring}}k_{2}$;
\item Suppose $K_{1}\subseteq_{\mathfrak{L}_{vf}}K_{2}$. Then $K_{1}\preceq K_{2}$ if and only if $\Gamma_{1}\preceq_{\mathfrak{L}_{oag}}\Gamma_{2}$ and $k_{1}\preceq_{\mathfrak{L}_{ring}}k_{2}$;\end{itemize}\end{thm}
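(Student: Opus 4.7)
The forward implications in both items are immediate: the value group sort and the residue field sort, together with the symbols $v$ and $res$, belong to $\mathfrak{L}_{vf}$, so every $\mathfrak{L}_{vf}$-elementary equivalence (resp.\ elementary embedding) restricts to the corresponding property on the two auxiliary sorts. I concentrate on the converses.

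For the converse of the first item, the plan is to reduce to a back-and-forth between saturated models. First I pass to $\kappa$-saturated elementary extensions of $K_{1}$ and $K_{2}$ with $\kappa>|K_{1}|+|K_{2}|$, and using uniqueness of saturated models of a complete theory in a given cardinality, I arrange that both are saturated of the same size. The hypotheses then upgrade $\Gamma_{1}\equiv\Gamma_{2}$ and $k_{1}\equiv k_{2}$ to isomorphisms $\varphi_{\Gamma}\colon\Gamma_{1}\cong\Gamma_{2}$ and $\varphi_{k}\colon k_{1}\cong k_{2}$, and the problem becomes to lift this pair to an $\mathfrak{L}_{vf}$-isomorphism $K_{1}\cong K_{2}$. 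I build a back-and-forth family of partial valued-field isomorphisms $f\colon E_{1}\to E_{2}$ between henselian subfields of size less than $\kappa$, compatible with restrictions of $\varphi_{\Gamma}$ and $\varphi_{k}$. To adjoin a new element $a\in K_{1}\setminus E_{1}$ to $E_{1}$, I split on whether the extension $E_{1}(a)/E_{1}$ is residue-transcendental, value-transcendental, or immediate. In the first two cases saturation of $K_{2}$ supplies an element $b\in K_{2}$ with the correct residue or the correct value, and $a\mapsto b$ extends $f$. In the immediate case I invoke Kaplansky's theorem: in equicharacteristic $0$ a henselian valued field is algebraically maximal and maximal immediate extensions are unique up to isomorphism over the base, so the valuation-theoretic type of $a$ over $E_{1}$ is determined by its pseudo-Cauchy behaviour, which transfers via $f$ to a type realized in $K_{2}$ by saturation. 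After each step I close under henselization, a purely field-theoretic operation with no effect on residue field or value group. Unioning along the chain produces the desired $\mathfrak{L}_{vf}$-isomorphism $K_{1}\cong K_{2}$, hence $K_{1}\equiv K_{2}$.

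For the converse of the second item I run essentially the same back-and-forth, but now over the common subfield $K_{1}$. By the Tarski--Vaught test it suffices that every $\mathfrak{L}_{vf}$-formula with parameters in $K_{1}$ realized in $K_{2}$ be realized in $K_{1}$. Passing to $\kappa$-saturated extensions of both fields over $K_{1}$, and using the elementarity of $\Gamma_{1}\preceq\Gamma_{2}$ and $k_{1}\preceq k_{2}$ to keep extending the auxiliary-sort components of the partial isomorphisms at each step, the same three-case analysis produces a valued-field isomorphism over $K_{1}$ between the saturated extensions, which yields $K_{1}\preceq K_{2}$.

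The decisive step is the immediate-extension case: the whole argument rests on Kaplansky's theorem that in equicharacteristic $0$ every henselian valued field is algebraically maximal and that maximal immediate extensions are unique up to isomorphism over the base. This rigidity breaks down in mixed characteristic with positive ramification, which is precisely why the mixed-characteristic version of the AKE principle is subtler and is one of the central motivations for the refinements developed later in the paper.
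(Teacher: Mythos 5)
The paper does not prove Theorem \ref{AKE}; it cites it from Ax--Kochen and Ershov as a known transfer principle, so there is no in-paper argument to compare against. Your outline is the standard proof via a back-and-forth between saturated models, reduction to isomorphisms of the auxiliary sorts, and Kaplansky's theory of pseudo-Cauchy sequences for the immediate step, and it is essentially sound. Two points deserve tightening. First, the move ``using uniqueness of saturated models of a complete theory in a given cardinality, I arrange that both are saturated of the same size'' is slightly misplaced: uniqueness applies to the auxiliary sorts $\Gamma_{1}\equiv\Gamma_{2}$ and $k_{1}\equiv k_{2}$ (you cannot invoke it for $K_{1},K_{2}$ themselves, since their elementary equivalence is what you are trying to prove), and what you really need is \emph{existence} of saturated models in a common cardinality, which requires some set-theoretic care (GCH plus absoluteness, special models, or an Ehrenfeucht--Fra\"iss\'e argument with $\aleph_{1}$-saturation). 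Second, the trichotomy ``residue-transcendental, value-transcendental, immediate'' is not exhaustive as stated: you must also absorb residue-algebraic and torsion value-group extensions, which is usually arranged by working with henselian, relatively algebraically closed subfields before the split, and the immediate case further requires Kaplansky's distinction between pseudo-Cauchy sequences of algebraic and of transcendental type. These refinements are standard and your sketch correctly identifies the decisive ingredient, namely that in equicharacteristic $0$ Kaplansky's hypothesis is automatic so maximal immediate extensions are unique, which is precisely what fails in ramified mixed characteristic and motivates the rest of the paper.
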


In \cite[Appendix A]{R17} Rideau gives a useful analysis of relative quantifier elimination and recalls the notion of resplendent relative quantifier elimination, that is a relative elimination of quantifiers that is preserved when considering an enrichment of the language for some sorts. Applying this argument to valued fields, we recall the following result which states that the Ax-Kochen/Ershov principle works resplendently.
\begin{thm} \label{AKErespl} Let $K^{*}=(K,\Gamma_{K}^{*},k_{K}^{*})$ and $L^{*}=(L,\Gamma_{L}^{*},k_{L}^{*})$ be henselian valued fields of equicharacterisic $0$ in $\mathfrak{L}_{vf}^{*}=(\mathfrak{L}_{ring},\mathfrak{L}_{oag}^{*},\mathfrak{L}_{ring}^{*},v,res)$, where $\mathfrak{L}_{oag}^{*}$ is an expansion of $\mathfrak{L}_{oag}$ and $\mathfrak{L}_{ring}^{*}$ is an expansion of $\mathfrak{L}_{ring}$ Then \begin{itemize}
\item $K^{*}\equiv L^{*}$ if and only if $k_{K}^{*}\equiv k_{L}^{*}$ and $\Gamma_{K}^{*}\equiv\Gamma_{L}^{*}$.
\item If $K^{*}\subseteq L^{*}$ then $K^{*}\preceq L^{*}$ if and only if $k_{K}^{*}\preceq k_{L}^{*}$ and $\Gamma_{K}^{*}\preceq\Gamma_{L}^{*}$.\end{itemize} \end{thm}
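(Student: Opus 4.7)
The plan is to derive Theorem \ref{AKErespl} from the classical Ax--Kochen/Ershov principle (Theorem \ref{AKE}) by a standard resplendence argument, essentially the content of Rideau's analysis in \cite[Appendix A]{R17}.

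In both statements the forward direction is immediate: since $\mathfrak{L}_{oag}^{*}$ and $\mathfrak{L}_{ring}^{*}$ are reducts of $\mathfrak{L}_{vf}^{*}$ obtained by restricting to a single sort, any elementary equivalence (respectively elementary extension) in $\mathfrak{L}_{vf}^{*}$ restricts at once to elementary equivalence (respectively extension) of the residue field in $\mathfrak{L}_{ring}^{*}$ and of the value group in $\mathfrak{L}_{oag}^{*}$.

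For the converse of the first item, I would pass to $\kappa$-saturated elementary extensions $\hat{K}^{*}$ and $\hat{L}^{*}$ in $\mathfrak{L}_{vf}^{*}$ for $\kappa$ sufficiently large. Their residue fields and value groups remain saturated models of the same complete theories in $\mathfrak{L}_{ring}^{*}$ and $\mathfrak{L}_{oag}^{*}$, hence admit back-and-forth systems of partial $\mathfrak{L}_{ring}^{*}$- and $\mathfrak{L}_{oag}^{*}$-isomorphisms. The crucial observation is that the proof of Theorem \ref{AKE} builds an isomorphism $\hat{K}\cong\hat{L}$ by lifting back-and-forth moves on the residue field and value group sorts via purely algebraic operations on the valued field sort (henselianity, pseudo-Cauchy completions, construction of maximal immediate extensions). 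These lifts never inspect the internal structure of the residue field or value group beyond the partial isomorphism data that is being transported; feeding in $\mathfrak{L}_{ring}^{*}$- and $\mathfrak{L}_{oag}^{*}$-back-and-forth systems therefore produces an $\mathfrak{L}_{vf}^{*}$-isomorphism $\hat{K}^{*}\cong\hat{L}^{*}$, which gives $K^{*}\equiv L^{*}$. For the converse of the second item, assuming $K^{*}\subseteq L^{*}$ with the stated elementarity hypotheses, one runs the same back-and-forth in sufficiently saturated extensions but starting from the partial identity on $K^{*}$: the elementarity $k_{K}^{*}\preceq k_{L}^{*}$ and $\Gamma_{K}^{*}\preceq\Gamma_{L}^{*}$ yields the required partial $\mathfrak{L}_{ring}^{*}$- and $\mathfrak{L}_{oag}^{*}$-isomorphisms extending the residues and valuations of any tuple $\bar{a}\in K^{*}$, and the same lifting argument then produces an $\mathfrak{L}_{vf}^{*}$-elementary embedding.

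The main subtlety is precisely this resplendence of the AKE back-and-forth, i.e.\ checking that none of its steps depend on the particular language chosen on the residue field or value group sorts beyond the transported partial isomorphism data. This is exactly what Rideau's formalism isolates, so one may quote \cite[Appendix A]{R17} directly; alternatively, one verifies by inspection of a standard proof of Theorem \ref{AKE} that the only appearances of the residue field and value group are through partial isomorphisms, which carry over verbatim once the enriched back-and-forth systems are fed in.
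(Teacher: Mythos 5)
Your proposal is correct and matches the paper's approach: the paper does not give a standalone proof either, but instead invokes Rideau's analysis of resplendent relative quantifier elimination (\cite[Appendix A]{R17}), which is precisely the observation you unpack — that the AKE back-and-forth in saturated models only touches the residue field and value group through partial elementary maps, so enriched back-and-forth systems on those sorts can be fed in unchanged. Your sketch of the lifting argument and the reduction to saturated extensions is the standard way to make Rideau's formalism concrete here, and it is sound.
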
 

\subsection{Complete valued fields} \label{saturation}
The crucial step in the proof of our main result (Theorem \ref{TheoremModelCompletenessFiniteRamification}) is to show, under some conditions, an elementary embedding in the language of rings of two henselian valued fields $K_{1}$ and $K_{2}$. Thus, without loss of generality, we can assume $K_{1},\, K_{2}$ to be $\aleph_{1}$-saturated. Indeed, consider their ultrapowers $\prod_{D}K_{1}$ and $\prod_{D}K_{2}$ over a non-principal ultrafilter $D$. We have that if $\prod_{D}K_{1}\preceq\prod_{D}K_{2}$ then, since both $K_{1}$ and $K_{2}$ are elementary embedded into the respective ultrapowers, we have also $K_{1}\preceq K_{2}$.
In order to work in a saturated context, we recall the notion of pseudo completeness for a valued field, introduced by Ostrowski and Kaplansky in \cite{O17} and \cite{Kapl42}, respectively. This notion plays a key role in the study of maximal valued fields, i.e. valued fields with no immediate extensions.
\begin{defn}Let $(K,v)$ be a valued field. A sequence $(c_{\alpha})_{\alpha\in\lambda}$, indexed by some limit ordinal $\lambda$, is \textit{pseudo-Cauchy} if there is $\alpha_{0}<\lambda$ such that for all $\alpha_{0}<\alpha'<\alpha*<\alpha''<\lambda$, $$v(c_{\alpha''}-c_{\alpha*})>v(c_{\alpha*}-c_{\alpha'}).$$ \end{defn}
\begin{defn} Let $(K,v)$ be a valued field, $(c_{\alpha})_{\alpha\in\lambda}$ a pseudo-Cauchy sequence and $c\in K$. Then $(c_{\alpha})_{\alpha\in\lambda}$ \textit{pseudo converges} to $c$, if there is $0<\alpha_{0}<\lambda$ such that for every $\alpha_{0}<\alpha'<\alpha''<\lambda$, $$v(c-c_{\alpha''})>v(c-c_{\alpha'}).$$ In this case we say that $c$ is a \textit{pseudo-limit (PL)} of $(c_{\alpha})_{\alpha\in\lambda}$. \end{defn}
\begin{defn}
A valued field is $\lambda$\textit{-pseudo complete} if all pseudo-Cauchy sequences $(c_{\alpha})_{\alpha\in\lambda}$ have a pseudo-limit in $K$.
\end{defn}
The next is a well-known fact.
\begin{fact} \label{saturationpseudocomplete} An $\aleph_{1}$-saturated valued field is $\omega$-pseudo complete. \end{fact}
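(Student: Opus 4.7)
My plan is to realize a pseudo-limit by means of a suitable partial type and invoke $\aleph_{1}$-saturation. Let $(c_{n})_{n\in\omega}$ be a pseudo-Cauchy sequence in $(K,v)$, and set $\gamma_{n}:=v(c_{n+1}-c_{n})$. By definition of pseudo-Cauchy, after discarding an initial segment we may assume that $(\gamma_{n})_{n\in\omega}$ is strictly increasing; a standard ultrametric computation then gives $v(c_{m}-c_{n})=\gamma_{n}$ whenever $n<m$.

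Next I would introduce the partial type, in the free variable $x$ of the $K$-sort,
\[
p(x)\;=\;\bigl\{\,v(x-c_{n})\;\geq\;v(c_{n+1}-c_{n})\;:\;n\in\omega\,\bigr\}.
\]
Each formula in $p(x)$ uses only the parameters $c_{n},c_{n+1}\in K$ and the function symbol $v$ of $\mathfrak{L}_{vf}$, so $p$ is a type over the countable parameter set $\{c_{n}:n\in\omega\}$. The central step is to verify finite satisfiability: given indices $n_{1}<\dots<n_{k}$, the element $c_{n_{k}+1}\in K$ witnesses every condition simultaneously, because $v(c_{n_{k}+1}-c_{n_{i}})=\gamma_{n_{i}}$ for each $i\leq k$, using the strict monotonicity of the $\gamma_{j}$'s and the ultrametric inequality.

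Since $K$ is $\aleph_{1}$-saturated and $p$ is a consistent type over countably many parameters, there exists a realization $c\in K$. It remains to show that $c$ is a pseudo-limit of $(c_{n})_{n\in\omega}$, i.e., $v(c-c_{m})>v(c-c_{n})$ for all $n<m$ sufficiently large. This follows from a case analysis: from $p(x)$ we know $v(c-c_{n})\geq\gamma_{n}$ and $v(c-c_{m})\geq\gamma_{m}>\gamma_{n}$; on the other hand, $v(c_{m}-c_{n})=\gamma_{n}$. If $v(c-c_{n})>\gamma_{n}$ held, the ultrametric inequality applied to $(c-c_{n})-(c-c_{m})=c_{m}-c_{n}$ would force $v(c_{m}-c_{n})>\gamma_{n}$, a contradiction. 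Hence $v(c-c_{n})=\gamma_{n}$ and so $v(c-c_{m})\geq\gamma_{m}>\gamma_{n}=v(c-c_{n})$, as required.

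The only subtle point in this argument is the finite satisfiability step, which is where the strict monotonicity of $(\gamma_{n})$ and the ultrametric character of $v$ are both used; everything else is a routine application of saturation together with the basic ultrametric identity $v(a+b)=\min(v(a),v(b))$ when $v(a)\neq v(b)$.
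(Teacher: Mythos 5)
The paper states this result as a well-known fact and gives no proof, so there is no argument in the text to compare against. Your proof is correct and is the standard saturation argument: after passing to a tail so that $\gamma_n := v(c_{n+1}-c_n)$ is strictly increasing, the ultrametric identity gives $v(c_m-c_n)=\gamma_n$ for $n<m$; the type $\{v(x-c_n)\geq\gamma_n : n\in\omega\}$ is over countably many parameters, is finitely satisfied by suitable tails of the sequence, and any realization $c$ satisfies $v(c-c_n)=\gamma_n$ by the ultrametric inequality, so it is a pseudo-limit. The details are all in order; the only implicit point worth flagging is that each condition $v(x-c_n)\geq v(c_{n+1}-c_n)$ is indeed an $\mathfrak{L}_{vf}$-formula with parameters from $K$, which it is, so $\aleph_1$-saturation applies.
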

%\begin{proof} Let $(K,v)$ be an $\aleph_{1}$-saturated valued field and $(c_{\alpha})_{\alpha<\omega}$ a pseudo-Cauchy sequence such that $$v(c_{\nu}-c_{\alpha})<v(c_{\alpha}-c_{\mu})$$ for every $\alpha_{0}<\nu<\alpha<\mu$. Consider the type $$p(x)=\{v(x-c_{\alpha_{1}})<v(x-c_{\alpha_{2}})\,\vert\,\alpha_{0}<\alpha_{1}<\alpha_{2}\}.$$ It is finitely satisfiable since the sequence $(c_{\alpha})_{\alpha<\omega}$ is pseudo-Cauchy. By saturation, it is satisfied in $K$ by an element $c$ that is a pseudo-limit for $(c_{\alpha})_{\alpha<\omega}$.\end{proof}
Let $K$ be a mixed characteristic valued field and consider the valued fields $\dot{K}$ and $\mathring{K}$ obtained by coarsening with their respective valuations (Section \ref{hvf}). We have the following
\begin{lem} \label{coarseningpreservespseudocompleteness} Let $\lambda$ be any ordinal. If $K$ is $\lambda$-pseudo complete then $\dot{K}$ and $\mathring{K}$ with the respective valuations are $\lambda$-pseudo complete.\end{lem}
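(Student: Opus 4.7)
The plan is to prove the two statements in parallel, using the same strategy in each case: given a pseudo-Cauchy sequence for the coarser valuation ($\dot v$ or $\mathring v$), produce from it a pseudo-Cauchy sequence for the original $v$, apply the hypothesis that $(K,v)$ is $\lambda$-pseudo complete, and finally verify that the resulting $v$-pseudo-limit projects to a pseudo-limit for the coarser valuation. The key technical tool I would use throughout is the standard identity $v(c - c_\alpha) = v(c_\beta - c_\alpha)$, valid for all sufficiently large $\alpha < \beta$ whenever $c$ is a $v$-pseudo-limit of $(c_\alpha)$; it follows from the non-archimedean triangle inequality together with the defining inequality for a pseudo-limit.

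For $\dot K$, I would first observe that $\dot v(x) > \dot v(y)$ in $\Gamma/\Delta$ is strictly stronger than $v(x) > v(y)$ in $\Gamma$, holding precisely when $v(x) > v(y) + \delta$ for every $\delta \in \Delta$. Hence any $\dot v$-pseudo-Cauchy sequence $(c_\alpha)_{\alpha < \lambda}$ is automatically $v$-pseudo-Cauchy and, by assumption, has a $v$-pseudo-limit $c \in K$. Substituting the identity above into the $\dot v$-pseudo-Cauchy inequality applied to large triples $\alpha < \beta < \gamma$ then yields $\dot v(c - c_\alpha) < \dot v(c - c_\beta)$, so $c$ is a $\dot v$-pseudo-limit as well.

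For $\mathring K$, I would lift a $\mathring v$-pseudo-Cauchy sequence $(\bar c_\alpha)_{\alpha < \lambda}$ to arbitrary representatives $(c_\alpha)$ in $O_{\dot K}$. The pseudo-Cauchy inequalities for $\mathring v$ eventually force $\bar c_{\alpha''} - \bar c_{\alpha^*} \neq 0$, equivalently $v(c_{\alpha''} - c_{\alpha^*}) \in \Delta$, and so read as strictly increasing $v$-inequalities with both sides inside $\Delta$. Hence $(c_\alpha)$ is $v$-pseudo-Cauchy and acquires a $v$-pseudo-limit $c \in K$. The identity $v(c - c_\alpha) = v(c_\beta - c_\alpha) \in \Delta$, combined with $\dot v(c_\alpha) \geq 0$, gives $\dot v(c) \geq 0$ by the ultrametric inequality, so $c \in O_{\dot K}$; moreover $c - c_\alpha \notin \mathcal{M}_{\dot K}$, whence $\mathring v(\bar c - \bar c_\alpha) = v(c - c_\alpha)$ is strictly increasing in $\alpha$, and $\bar c$ is the required pseudo-limit in $\mathring K$.

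The main obstacle is bookkeeping between the three value groups $\Gamma$, $\Gamma/\Delta$ and $\Delta$: one must carefully distinguish when a $v$-value lies inside $\Delta$ (so that it descends to a finite $\mathring v$-value) from when it lies strictly above $\Delta$ (so that it collapses to $\infty$ in $\mathring v$), and dually when a strict $v$-inequality survives quotienting by $\Delta$ to give a strict $\dot v$-inequality. Once these translations are correctly set up, the $\lambda$-pseudo completeness of $(K,v)$ carries out the substantive work in both cases.
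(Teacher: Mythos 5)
Your proof follows the same lift-and-project strategy as the paper's own argument: lift a pseudo-Cauchy sequence for the coarser valuation to a $v$-pseudo-Cauchy sequence, invoke the $\lambda$-pseudo completeness of $(K,v)$, and show the resulting $v$-pseudo-limit descends. You are in fact somewhat more careful than the paper's terse proof, which asserts that pseudo-limits are ``preserved'' under the projection $\pi:\Gamma\to\Gamma/\Delta$ without explaining why a $v$-pseudo-limit yields a strict $\dot v$-inequality, and which does not check that the lifted pseudo-limit lands in $O_{\dot K}$; your use of the identity $v(c-c_\alpha)=v(c_\beta-c_\alpha)$ cleanly fills both of these small gaps.
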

\begin{proof} The valuation over $\dot{K}$ is given by the composition of $v$ and the projection map $\pi:\Gamma\longrightarrow\Gamma/\Delta$. Since $\pi$ preserves the order and the valued fields have the same domain $K$, then pseudo-Cauchy sequences and pseudo-limits are preserved. Thus $\dot{K}$ is $\lambda$-pseudo complete. 
Now, let $(x_{\alpha}+M_{K})_{\alpha<\lambda}$ be a pseudo-Cauchy sequence of elements in $\mathring{K}$, where $M_{K}$ is the maximal ideal of $K$ with respect to the valuation $v$. Then for $\alpha_{0}<\nu<\alpha<\mu<\lambda$, \begin{align*}&\mathring{v}(x_{\nu}+M_{K}-(x_{\alpha}+M_{K}))<\mathring{v}(x_{\alpha}+M_{K}-(x_{\mu}+M_{K}))\Longleftrightarrow \\ &\mathring{v}(x_{\nu}-x_{\alpha}+M_{K})<\mathring{v}(x_{\alpha}-x_{\mu}+M_{K})\Longleftrightarrow \\ & v(x_{\nu}-x_{\alpha})<v(x_{\alpha}-x_{\mu}),\end{align*} thus $(x_{\alpha}+M_{K})_{\alpha<\lambda}$ is pseudo-Cauchy in $(\mathring{K},\mathring{v})$ if and only if $(x_{\alpha})_{\alpha<\lambda}$ is pseudo-Cauchy in $(K,v)$. In the same way, it is easy to prove that $c$ is a pseudo-limit for $(x_{\alpha})_{\alpha<\lambda}$ in $(K,v)$ if and only if $c+M_{K}$ is a pseudo-limit for the respective sequence in $(\mathring{K},\mathring{v})$. This proves that if $(K,v)$ is $\lambda$-pseudo complete then the core field $(\mathring{K},\mathring{v})$ is $\lambda$-pseudo complete. \end{proof}
\begin{rem}\label{coreCauchycomplete}Note that, since $\mathring{K}$ has value group isomorphic to $\mathbb{Z}$, it is actually Cauchy complete.\end{rem}
For complete valued fields the following characterization holds (\cite[Sec 5, Theorem 4]{S79}).
\begin{thm} \label{wittringchar} Let $K$ be a complete valued field of mixed characteristic $(0,p)$ with value group isomorphic to $\mathbb{Z}$, and let $e$ be its ramification index. Then $K$ is a finite extension of $W(k)$, that is the fraction field of the Witt ring over $k$. In particular, $K=W(k)(\pi)$, where $\pi$ is a root of an Eisenstein polynomial over $W(k)$, i.e. a polynomial $$x^{e}+a_{e-1}x^{n-1}+\dots+a_{1}x+a_{0}\in W(k)[x]$$ such that for all $i\in\{0,\dots,e-1\}$, $p|a_{i}$ and $p^{2}\nmid a_{0}$, i.e. $a_{0}$ has minimal positive valuation. $\\$ Conversely, a root of such a polynomial, defines a totally ramified extension of $W(k)$ of degree $e$, and that root is an element with minimal positive valuation in $K$.  \end{thm}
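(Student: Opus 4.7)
The plan is to realize (the fraction field of) $W(k)$ as a canonical unramified subfield of $K$ via Teichmüller/Witt lifts, then to compute $[K:W(k)]=e$ from ramification and residue considerations, and finally to exhibit a uniformizer whose minimal polynomial is Eisenstein. For the construction, since $K$ is complete with residue characteristic $p$, for each $a\in k$ one sets $\tau(a)=\lim_n a_n^{p^n}$, where $a_n\in O_K$ is any lift of $a^{1/p^n}$; completeness ensures both that this limit exists and that it is independent of the choice of the $a_n$, so that $\tau$ is a multiplicative section of the residue map. The subring of $O_K$ generated by $p$ and the image of $\tau$ is a complete discrete valuation ring with uniformizer $p$ and residue field $k$, hence canonically isomorphic to the Witt ring; its fraction field embeds as an unramified subfield of $K$, which we again denote by $W(k)$.

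Once $W(k)\subseteq K$ is established, both fields share the residue field $k$, so the residue degree of $K/W(k)$ is $1$; and since $p$ is a uniformizer of $W(k)$ while $v(p)=e$ in $\Gamma_K\cong\mathbb{Z}$, the ramification index of $K/W(k)$ is $e$. The fundamental equality for complete discretely valued fields yields $[K:W(k)]=e$. To produce an Eisenstein generator, pick $\pi\in O_K$ with $v(\pi)=1$ and let $\varphi(x)=x^e+a_{e-1}x^{e-1}+\dots+a_0\in W(k)[x]$ be its minimal polynomial, which has degree $e$ by the previous computation. Because each $v(a_i)$ lies in $e\mathbb{Z}_{\geq 0}$, the values $v(a_i)+i$ have pairwise distinct residues modulo $e$; the ultrametric inequality applied to $\pi^e=-\sum_{i<e}a_i\pi^i$ then forces the minimum of the $v(a_i)+i$ to equal $v(\pi^e)=e$, which is possible only when $v(a_0)=e$ and $v(a_i)\geq e$ for $0<i<e$, i.e.\ $p\mid a_i$ for every $i$ and $p^2\nmid a_0$. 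The converse is classical: an Eisenstein polynomial over $W(k)$ is irreducible, and any root has valuation $v(a_0)/e=1$ in the unique extension of the valuation, so it generates a totally ramified extension of $W(k)$ of degree $e$.

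The principal obstacle is the Witt construction in the first paragraph: one needs completeness of $K$ in an essential way to realize $k$ as the residue field of a canonical subring of $O_K$ isomorphic to the Witt ring. Without completeness the Teichmüller sequences $a_n^{p^n}$ need not converge, and there is no canonical unramified base $W(k)\subseteq K$ to anchor the Eisenstein description of $K$.
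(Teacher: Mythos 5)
The paper does not prove this result; it is quoted from Serre's \emph{Local Fields} (the reference [S79], Chapter II, Section 5, Theorem 4), so there is no internal argument to compare against. Your proof is the standard one and is essentially correct: realize $W(k)$ inside $K$ via Teichm\"uller lifts, compute $[K:W(k)]=ef=e$ from the fundamental identity, and read off the Eisenstein shape of the minimal polynomial of a uniformizer from the ultrametric inequality together with the observation that the values $v(a_i)+i$ are pairwise incongruent modulo $e$.

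A few small points deserve sharpening. The Teichm\"uller construction $\tau(a)=\lim_n a_n^{p^n}$, with $a_n$ a lift of $a^{1/p^n}$, presupposes that $k$ is perfect; this is not written into the theorem statement here but is the standing hypothesis of the cited result, and the paper guarantees it (via Remark~\ref{perfect}) in the only setting where the theorem is invoked. The phrase ``the subring of $O_K$ generated by $p$ and the image of $\tau$'' should be the topological closure of that subring, i.e.\ the set of convergent series $\sum_{n\geq 0}\tau(a_n)p^n$; it is this closed subring that is a complete DVR with uniformizer $p$ and residue field $k$, hence isomorphic to $W(k)$. Finally, your assertion that the minimal polynomial of $\pi$ has degree exactly $e$ (rather than a proper divisor of $e$) needs one more line: since $v(\pi)=1$ while $v(W(k)^{\times})=e\mathbb{Z}$ inside $\Gamma_K$, the subextension $W(k)(\pi)/W(k)$ already has ramification index $e$, so $[W(k)(\pi):W(k)]\geq e=[K:W(k)]$ and therefore $W(k)(\pi)=K$. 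With these repairs the argument is a faithful reproduction of the classical proof.
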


\section{A one-sorted language for model completeness in the case of value group with finite spines}
In the proof of Theorem \ref{TheoremModelCompletenessFiniteRamification} we will use the Ax-Kochen/Ershov principle (Theorem \ref{AKE}) and in particular its property of resplendence (Theorem \ref{AKErespl}). We shall work with two languages $\mathfrak{L}^{*}_{vf}$ and $\mathfrak{L}^{**}_{vf}$ that expand the usual language of valued fields $\mathfrak{L}_{vf}$ respectively on the group sort and on the field sort. To understand the relation between these two languages, we give the following

\begin{lem}\label{bi-interpretablelanguages}
Consider $\mathfrak{L}_{oag}^{*}=\mathfrak{L}_{oag}\cup\{j1_{i}+H_{i}\}_{i\in I,j=0,1}$ and the expansion of the language of valued fields $$\mathfrak{L}_{vf}^{*}=(\mathfrak{L}_{ring},\mathfrak{L}_{oag}^{*},\mathfrak{L}_{ring},v,res).$$ Let $(K,\Gamma^{*},k,v,res)$ be an $\mathfrak{L}_{vf}^{*}$-structure. Define predicates $A_{i,j}$, for $i\in I$ and $j=0,1$, in the sort $K$, such that $$A_{i,j}^{K}=\left\{a\in K\,\vert\,v(a)\in j1_{i}+{H_{i}}\right\},$$ and consider the language $$\mathfrak{L}_{vf}^{**}=(\mathfrak{L}_{ring}^{*},\mathfrak{L}_{oag},\mathfrak{L}_{ring},v,res),$$ where $\mathfrak{L}^{*}_{ring}=\mathfrak{L}_{ring}\cup\{A_{i,j}\}_{i\in I,j=0,1}$. Then $\mathfrak{L}^{*}_{vf}$ and $\mathfrak{L}^{**}_{vf}$ are bi-interpretable. \end{lem}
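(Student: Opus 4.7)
The plan is to exhibit interpretations in both directions that keep all three sorts and all of the base symbols $(+,\cdot,0,1,\leq,v,res)$ fixed, and use only the valuation to translate between the two collections of extra predicates. Since no sort is quotiented or coded by tuples, the bi-interpretability amounts to showing that each new predicate is quantifier-freely (in fact positively) definable from the other language's new predicates together with $v$, and then verifying that the two round-trips return the original structures on the nose.

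First I would describe the interpretation of $\mathfrak{L}_{vf}^{**}$ in $\mathfrak{L}_{vf}^{*}$. Given an $\mathfrak{L}_{vf}^{*}$-structure $(K,\Gamma^{*},k,v,res)$, leave every sort and every base symbol unchanged, and interpret each unary field predicate $A_{i,j}$ by the $\mathfrak{L}_{vf}^{*}$-formula
$$\varphi_{i,j}(x)\;:=\;(j1_{i}+H_{i})(v(x)).$$
The set of realizations of $\varphi_{i,j}$ in $K$ is exactly $\{a\in K : v(a)\in j1_{i}+H_{i}\}$, which matches the definition of $A_{i,j}^{K}$ given in the statement.

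Next I would give the interpretation in the other direction. Starting from an $\mathfrak{L}_{vf}^{**}$-structure $(K,\Gamma,k,v,res,(A_{i,j})_{i,j})$, interpret the group predicate $j1_{i}+H_{i}$ on $\Gamma$ by
$$\psi_{i,j}(\gamma)\;:=\;\exists x\,(v(x)=\gamma\,\wedge\,A_{i,j}(x)),$$
or equivalently by the universal formula $\forall x\,(v(x)=\gamma\to A_{i,j}(x))$. Both define the same subset of $\Gamma$ because $A_{i,j}$ is, by construction, a union of fibres of $v$; and the surjectivity of $v\colon K\to\Gamma$ ensures that this subset is precisely $j1_{i}+H_{i}$ as soon as we started with the intended structure. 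The choice of representative $1_{i}$ plays no role, since only the coset $1_{i}+H_{i}$ (and its integer multiples) is named.

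Finally I would check that the two interpretations are mutually inverse, which here reduces to a one-line verification on each sort. Starting from an $\mathfrak{L}_{vf}^{*}$-structure and applying first $A_{i,j}:=v^{-1}(j1_{i}+H_{i})$ and then $\psi_{i,j}$, one recovers $\{v(x):v(x)\in j1_{i}+H_{i}\}=j1_{i}+H_{i}$ by surjectivity of $v$; the reverse composition is equally immediate. I do not foresee a real obstacle: the argument is entirely formal, and the only point worth flagging is the remark that the cosets $j1_{i}+H_{i}$ (and not some choice of representative) are what is named, so the two interpretations are genuinely canonical and uniform across all models of either theory.
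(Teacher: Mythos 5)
Your proof is correct and spells out precisely the routine verification that the paper omits, stating only that the claim ``follows from the definition of the languages.'' The point you correctly isolate is that, given surjectivity of $v$ onto $\Gamma$, each new predicate is quantifier-freely definable from the other via $v$, and the two round-trips are the identity; this is exactly what the paper takes as tautological.
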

\begin{proof}It follows from the definition of the languages. \end{proof}

In particular, we will use that the languages $\mathfrak{L}_{vf}^{*}$ and $\mathfrak{L}_{vf}^{**}$ preserve substructures and elementary extensions. We note that the bi-interpretability of the languages has the following consequences.
\begin{rem} \label{embedd} Let $$\mathcal{K}_{1}=(K_{1},\Gamma_{1}^{*},k_{1},v_{1},res_{1}),$$ $$\mathcal{K}_{2}=(K_{2},\Gamma_{2}^{*},k_{2},v_{2},res_{2})$$  be two $\mathfrak{L}_{vf}^{*}$-structures and $$\widetilde{\mathcal{K}_{1}}=(K_{1}^{*},\Gamma_{1},k_{1},v_{1},res_{1}),$$ $$\widetilde{\mathcal{K}_{2}}=(K_{2}^{*},\Gamma_{2},k_{2},v_{2},res_{2})$$ the corresponding $\mathfrak{L}_{vf}^{**}$-structures. Then
$$\mathcal{K}_{1}\subseteq_{\mathfrak{L}_{vf}^{*}}\mathcal{K}_{2}\iff\widetilde{\mathcal{K}_{1}}\subseteq_{\mathfrak{L}_{vf}^{**}}\widetilde{\mathcal{K}_{2}}.$$ \end{rem}
\begin{proof}
It suffices to show that the thesis holds considering the restrictions of the languages to the first two sorts and the function symbol defined between them, that is
$$(K_{1},\Gamma_{1}^{*},v_{1})\subseteq_{\mathfrak{L}_{{vf}^{*}_{\restriction_{ K,\Gamma^{*}}}}}(K_{2},\Gamma_{2}^{*},v_{2})\iff(K_{1}^{*},\Gamma_{1},v_{1})\subseteq_{\mathfrak{L}_{{vf}^{**}_{\restriction_{K^{*},\Gamma}}}}(K_{2}^{*},\Gamma_{2},v_{2}).$$ 
First note that $(K_{1},\Gamma_{1}^{*},v_{1})\subseteq_{\mathfrak{L}_{{vf}^{*}_{\restriction_{ K,\Gamma^{*}}}}}(K_{2},\Gamma_{2}^{*},v_{2})$ if and only if \newline $K_{1}\subseteq_{\mathfrak{L}_{ring}}K_{2}$, $\Gamma_{1}^{*}\subseteq_{\mathfrak{L}_{oag}^{*}}\Gamma_{2}^{*}$ and $v_{2\restriction_{K_{1}}}=v_{1}$. Analogously for $(K_{1}^{*},\Gamma_{1},v_{1})$ and $(K_{2}^{*},\Gamma_{2},v_{2})$.\\
Thus, suppose that $$(K_{1}^{*},\Gamma_{1})\nsubseteq_{\mathfrak{L}_{{vf}^{**}_{\restriction_{K^{*},\Gamma}}}}(K_{2}^{*},\Gamma_{2}).$$ \\ Clearly, if $\Gamma_{1}\nsubseteq_{\mathfrak{L}_{oag}}\Gamma_{2}$ then $\Gamma_{1}\nsubseteq_{\mathfrak{L}_{oag}^{*}}\Gamma_{2}$, and so we have $$(K_{1},\Gamma_{1}^{*},v_{1})\nsubseteq_{\mathfrak{L}_{{vf}^{*}_{\restriction_{K,\Gamma^{*}}}}}(K_{2},\Gamma_{2}^{*},v_{2}).$$ \\ Thus, suppose $K_{1}^{*}\nsubseteq_{\mathfrak{L}_{ring}^{*}}K_{2}^{*}$ with $A_{i,j}^{K_{1}^{*}}\neq A_{i,j}^{K_{2}^{*}}\cap K_{1}^{*}$ for some $i\in I$ and $j\in\{0,1\}$. This means that there is $x\in K_{1}^{*}$ such that $v_{1}(x)\in\,j1_{i}+H_{i}$ and $v_{2}(x)\notin j1_{i}+H_{i}$.
This happens if and only if $$(j1_{i}+H_{i})^{\Gamma_{1}^{*}}\neq(j1_{i}+H_{i})^{\Gamma_{2}^{*}}\cap\Gamma_{1}^{*}/H_{i},$$ since $v_{2}$ is an extension of $v_{1}$ to $K_{2}^{*}$. Thus $\Gamma_{1}^{*}\nsubseteq_{\mathfrak{L}_{oag}^{*}}\Gamma_{2}^{*}$ and so $\widetilde{\mathcal{K}_{1}}\nsubseteq_{\mathfrak{L}_{vf}^{*}}\widetilde{\mathcal{K}_{2}}$. \\
\end{proof}

\begin{cor} \label{elemb} Let $\mathcal{K}_{1},\mathcal{K}_{2}$ be $\mathfrak{L}^{*}_{vf}$-structures such that $\mathcal{K}_{1}\subseteq\mathcal{K}_{2}$. Then $$\mathcal{K}_{1}\preceq_{\mathfrak{L}^{*}_{vf}}\mathcal{K}_{2}\Longleftrightarrow\widetilde{\mathcal{K}_{1}}\preceq_{\mathfrak{L}^{**}_{vf}}\widetilde{\mathcal{K}_{2}}$$\end{cor}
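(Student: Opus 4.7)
The plan is to reduce the corollary to a formula-translation argument based on the bi-interpretability established in Lemma \ref{bi-interpretablelanguages}. By Remark \ref{embedd}, the inclusion $\mathcal{K}_1 \subseteq \mathcal{K}_2$ in $\mathfrak{L}^*_{vf}$ is already equivalent to $\widetilde{\mathcal{K}_1} \subseteq \widetilde{\mathcal{K}_2}$ in $\mathfrak{L}^{**}_{vf}$, so both sides of the biconditional live under equivalent hypotheses; it remains to check that satisfaction of formulas with parameters matches up across the two languages.

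I would then set up two explicit mutual translations. Given an $\mathfrak{L}^{**}_{vf}$-formula $\varphi$, form $\varphi^*$ by replacing every atomic subformula $A_{i,j}(t)$, with $t$ a field-sort term, by the $\mathfrak{L}^*_{vf}$-formula asserting $v(t) \in j1_i + H_i$. Going the other way, given an $\mathfrak{L}^*_{vf}$-formula $\psi$, replace every atomic subformula of the form $s \in j1_i + H_i$, with $s$ a group-sort term, by the $\mathfrak{L}^{**}_{vf}$-formula $\exists a \in K^{\times}\bigl(v(a)=s \wedge A_{i,j}(a)\bigr)$. The defining relation $A_{i,j}^{K} = \{a \in K : v(a) \in j1_i + H_i\}$ from Lemma \ref{bi-interpretablelanguages}, together with the fact that the value group of a valued field coincides with $v(K^{\times}) \cup \{\infty\}$, ensures that $\varphi$ and $\varphi^*$ (respectively $\psi$ and $\psi^{**}$) have the same truth value at every tuple in the corresponding structure $\widetilde{\mathcal{K}}$, resp.\ $\mathcal{K}$.

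With the translations in hand, the equivalence of the two elementary embeddings is a direct chain of biconditionals. For the forward direction, suppose $\mathcal{K}_1 \preceq_{\mathfrak{L}^*_{vf}} \mathcal{K}_2$; then for every $\mathfrak{L}^{**}_{vf}$-formula $\varphi(\bar{x})$ and every tuple $\bar{a}$ from $\widetilde{\mathcal{K}_1}$,
\[
\widetilde{\mathcal{K}_1} \models \varphi(\bar{a}) \iff \mathcal{K}_1 \models \varphi^*(\bar{a}) \iff \mathcal{K}_2 \models \varphi^*(\bar{a}) \iff \widetilde{\mathcal{K}_2} \models \varphi(\bar{a}),
\]
yielding $\widetilde{\mathcal{K}_1} \preceq_{\mathfrak{L}^{**}_{vf}} \widetilde{\mathcal{K}_2}$. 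The converse is entirely symmetric, using $\psi^{**}$ in place of $\varphi^*$.

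There is no substantive obstacle here, since the corollary is essentially a rewording of the bi-interpretability; the only care point is sort-bookkeeping when a group-sort term or a bound group-sort variable appears in the formula being translated, which is handled uniformly by introducing an existential quantifier picking up a preimage under $v$.
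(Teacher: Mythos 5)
Your proposal is correct and follows essentially the same route as the paper: both reduce the claim to the bi-interpretability of $\mathfrak{L}^{*}_{vf}$ and $\mathfrak{L}^{**}_{vf}$ together with Remark \ref{embedd} for the inclusion, and then transfer satisfaction of formulas with parameters between $\mathcal{K}_i$ and $\widetilde{\mathcal{K}_i}$. The only difference is presentational — the paper argues one direction by contradiction and invokes bi-interpretability as a black box, whereas you spell out the two mutual translations $\varphi\mapsto\varphi^{*}$ and $\psi\mapsto\psi^{**}$ explicitly (including the existential quantifier $\exists a\,(v(a)=s\wedge A_{i,j}(a))$ to lift group-sort atoms, justified by $\Gamma=v(K^{\times})$) and give a direct chain of biconditionals, which is a slightly cleaner presentation of the same idea.
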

\begin{proof} Suppose $\mathcal{K}_{1}\preceq_{\mathfrak{L}^{*}_{vf}}\mathcal{K}_{2}$. Then by Remark \ref{embedd}, $\widetilde{\mathcal{K}_{1}}\subseteq\widetilde{\mathcal{K}_{2}}$ in $\mathfrak{L}_{vf}^{**}$ and suppose for a contraddiction that $\widetilde{\mathcal{K}_{1}}\not\preceq\widetilde{\mathcal{K}_{2}}$. Then there exists an $\mathfrak{L}_{vf}^{**}$-formula $\widetilde{\phi}(x)=\exists y\psi(x,y)$ with $\psi$ quantifier free and $a\in K_{1}^{*}\cup\Gamma_{1}\cup k_{1}$ such that $$\widetilde{\mathcal{K}_{1}}\models\widetilde{\phi}(a)\,\,\text{and}\,\,\widetilde{\mathcal{K}_{2}}\not\models\widetilde{\phi}(a).$$ By the bi-interpretability of the languages (Lemma \ref{bi-interpretablelanguages}), there exists an $\mathfrak{L}_{vf}^{*}$-formula $\phi$ and $b\in K_{2}\cup\Gamma_{2}^{*}\cup k_{2}$ such that $$\mathcal{K}_{1}\models\phi(a)\,\,\text{and}\,\,\mathcal{K}_{2}\not\models\phi(a).$$\end{proof}
 
We can now give the proof of the main theorem of this section.
\begin{thm}\label{TheoremModelCompletenessFiniteRamification} Let $K$ be an Henselian valued field of mixed characteristic $(0,p)$, finite ramification $e\geq1$, residue field $k$ and value group $\Gamma$ with finite spines. Let $(H_{i})_{i\in I}$ be an enumeration of the definable convex subgroups of $\Gamma$. If the theory of the residue field is model complete in the language of rings, then the theory of $K$ is model complete in the language $\mathfrak{L}_{Ring}^{*}=\{0,+,1,\cdot, A_{i,j}\}$ where $(A_{i,j})_{i\in I,j=0,1}$ are predicates such that $$A_{i,j}^{K}=\{a\in K\,\vert\, v(a)\in\,j1_{i}+H_{i}\}.$$\end{thm}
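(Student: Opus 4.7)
The plan is to show that every $\mathfrak{L}_{Ring}^{*}$-embedding $K_{1}\subseteq K_{2}$ of models of $Th(K)$ is elementary, by coarsening the valuation to reduce to the equicharacteristic-zero situation and then invoking the resplendent Ax-Kochen/Ershov principle. By Proposition \ref{valdefn} and Lemma \ref{extvf}, such an embedding is automatically one of valued fields sharing the ramification $e$. Then, by Lemma \ref{bi-interpretablelanguages}, Remark \ref{embedd} and Corollary \ref{elemb}, it suffices to establish elementarity $\mathcal{K}_{1}\preceq\mathcal{K}_{2}$ in the equivalent three-sorted language $\mathfrak{L}_{vf}^{*}$. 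Replacing both fields by ultrapowers as in Section \ref{saturation}, I may assume each is $\aleph_{1}$-saturated, hence $\omega$-pseudo complete (Fact \ref{saturationpseudocomplete}).

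Next, I would coarsen by the smallest non-trivial convex subgroup $\Delta_{i}=\langle 1_{\Gamma_{i}}\rangle_{conv}$, which is isomorphic to $\mathbb{Z}$ because finite ramification forces a minimal positive element in $\Gamma_{i}$. The coarsened fields $\dot{K}_{i}$ are equicharacteristic-zero henselian valued fields with value group $\Gamma_{i}/\Delta_{i}$ (still of finite spines) and residue field $\mathring{K}_{i}$; by Lemma \ref{coarseningpreservespseudocompleteness} and Remark \ref{coreCauchycomplete} the core fields $\mathring{K}_{i}$ are Cauchy-complete henselian valued fields of mixed characteristic with value group $\mathbb{Z}$, ramification $e$ (Lemma \ref{ramificationcorefield}) and residue field $k_{i}$, hence totally ramified degree-$e$ extensions of $W(k_{i})$ by Theorem \ref{wittringchar}.

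Now I would verify elementary embeddings at both parameters required by resplendent AKE. For the value group, Lemma \ref{LemmaModelCompleteQuotientFiniteSpines} provides model completeness of $Th(\Gamma_{i}/\Delta_{i})$ in $\mathfrak{L}_{oag}^{*}$; the embedding $\Gamma_{1}/\Delta_{1}\subseteq \Gamma_{2}/\Delta_{2}$ in $\mathfrak{L}_{oag}^{*}$ is inherited because the predicate $1_{0}+H_{0}$ forces preservation of $1_{\Gamma_{i}}$ and hence of $\Delta_{i}$, while every other $H_{j}$ contains $\Delta_{i}$ and descends to a definable convex subgroup of the quotient. For the core fields, the embedding $\mathring{K}_{1}\subseteq \mathring{K}_{2}$ is inherited from the coarsening, and both satisfy the hypotheses of Derakshan-Macintyre's theorem (henselian, finite ramification $e$, value group $\mathbb{Z}$, residue fields $k_{i}$ elementarily embedded by the assumed model completeness of $Th(k)$), so $\mathring{K}_{1}\preceq \mathring{K}_{2}$ in $\mathfrak{L}_{ring}$; by Proposition \ref{valdefn} the valuation ring of $\mathring{K}_{i}$ is $\mathfrak{L}_{ring}$-definable, so this upgrades to elementarity as valued fields.

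Applying resplendent AKE (Theorem \ref{AKErespl}) to $\dot{K}_{1}\subseteq \dot{K}_{2}$, with the value group sort enriched by $\mathfrak{L}_{oag}^{*}$ and the residue field sort enriched to carry the full valued-field structure of $\mathring{K}_{i}$, yields $\dot{K}_{1}\preceq \dot{K}_{2}$ in this enriched language. The final step is to translate this back to $\mathcal{K}_{1}\preceq \mathcal{K}_{2}$ in $\mathfrak{L}_{vf}^{*}$, equivalently (via Lemma \ref{bi-interpretablelanguages}) in $\mathfrak{L}_{vf}^{**}$: one verifies that every predicate $A_{i,j}$ is definable from $\dot{v}_{i}$, $\mathring{v}_{i}$ and $\dot{res}_{i}$, since for $i\neq 0$ the containment $\Delta_{i}\subseteq H_{i}$ rewrites $A_{i,j}^{K_{i}}$ purely in terms of $\dot{v}_{i}$, whereas for $i=0$ it becomes a joint condition on $\dot{v}_{i}$ and $\mathring{v}_{i}\circ\dot{res}_{i}$. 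The main obstacle I foresee is precisely this last translation: making rigorous that the $\mathfrak{L}_{vf}^{*}$-structure on $K_{i}$ is recovered from the enriched $\dot{K}_{i}$-structure, so that resplendent AKE genuinely transfers elementarity into the language of the statement.
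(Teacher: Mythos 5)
Your high-level plan (coarsen by the smallest convex subgroup, handle the value group and core field separately, invoke the resplendent AKE principle, translate back through the bi-interpretable languages) coincides with the paper's, but two of your intermediate steps replace the paper's arguments with shortcuts that do not actually close the proof.

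The serious gap is the core-field step. You invoke Derakshan--Macintyre to conclude $\mathring{K}_{1}\preceq\mathring{K}_{2}$. But Derakshan--Macintyre gives model completeness of $Th(\mathring{K}_{1})$; to obtain the elementary embedding you additionally need $\mathring{K}_{2}\models Th(\mathring{K}_{1})$, i.e.\ $\mathring{K}_{1}\equiv\mathring{K}_{2}$, and you nowhere establish this. It is \emph{not} automatic: the paper itself points out (citing \cite[Example 2.3]{ADJ23}) that the residue field theory, the value group theory and the ramification index do \emph{not} determine the theory of a finitely ramified mixed-characteristic henselian valued field up to elementary equivalence, so $k_{1}\equiv k_{2}$, $\Delta_{1}\cong\Delta_{2}\cong\mathbb{Z}$ and common ramification $e$ do not give $\mathring{K}_{1}\equiv\mathring{K}_{2}$. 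The paper's actual argument is structural: since $\mathring{K}_{1}\subseteq\mathring{K}_{2}$ as (complete, henselian, ramification $e$) valued fields, a uniformizer $\pi$ of $\mathring{K}_{1}$ with Eisenstein polynomial $E$ over $W(k_{1})$ remains Eisenstein over $W(k_{2})$ and generates $\mathring{K}_{2}$ over $W(k_{2})$ (First Claim), then $W(k_{1})\preceq W(k_{2})$ by AKE applied to the unramified Witt rings (Second Claim), and finally the degree-$e$ extension by $\pi$ is interpreted uniformly inside $W(k_{i})$ via the companion matrix of $E$. That chain is where the theorem is actually proved, and it is what you skip by appealing to Derakshan--Macintyre as a black box.

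A smaller issue is your value-group step. You replace the paper's use of Proposition \ref{MCquotients} with Lemma \ref{LemmaModelCompleteQuotientFiniteSpines}. To use model completeness of $Th(\Gamma/\Delta)$ you still need $\Gamma_{1}/\Delta_{1}\equiv\Gamma_{2}/\Delta_{2}$ and an $\mathfrak{L}_{oag}^{*}$-embedding between the quotients, and since $\Delta_{i}=\langle 1_{\Gamma_{i}}\rangle_{conv}$ is in general not definable without parameters (this is exactly the point of the remark after Proposition \ref{MCquotients}), a naive interpretability argument does not give the elementary equivalence of the quotients. Proposition \ref{MCquotients} is designed precisely to circumvent this by exploiting the $\aleph_{1}$-saturation you already introduced. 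You are also right to be worried about the final translation back to $\mathfrak{L}_{vf}^{*}$; the paper handles that with Corollary \ref{elemb} together with the identities $(1_{i}+H_{i})^{\Gamma/\Delta}=(1_{i}+\Delta)+H_{i}^{\Gamma}/\Delta$, so this part is fixable, but the core-field gap above is the substantive one.
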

\begin{proof} We follow the structure of the proof of Theorem 1 in \cite{MD16}. We start working with the three sorted language $\mathfrak{L}_{vf}^{**}=(\mathfrak{L}^{*}_{Ring},\mathfrak{L}_{oag},\mathfrak{L}_{ring},v,res)$ that expands the language $\mathfrak{L}_{vf}$ on the field sort, with the language $\mathfrak{L}^{*}_{Ring}$ defined in the statement. Let $$(K_{1},\Gamma_{1},k_{1},v_{1},res_{1}),$$ $$(K_{2},\Gamma_{2},k_{2},v_{2},res_{2})$$ be two models of $Th(K)$ in $\mathfrak{L}^{**}_{vf}$ such that $K_{1}\subseteq K_{2}$ as expanded fields in $\mathfrak{L}^{*}_{Ring}$, and with $\Gamma_{1},\,\Gamma_{2}$ and $k_{1},\,k_{2}$ their value groups and residue fields, respectively. We want to show that $K_{1}\preceq K_{2}$ in $\mathfrak{L}^{*}_{Ring}$. By Section \ref{saturation}, we may assume $K_{1},\,K_{2}$ to be $\aleph_{1}$-saturated. Lemma \ref{extvf} ensures that that $(K_{1},v_{1})\subseteq(K_{2},v_{2})$ is an embedding of valued fields, thus $$(K_{1},\Gamma_{1},k_{1},v_{1},res_{1})\subseteq_{\mathfrak{L}^{**}_{vf}}(K_{2},\Gamma_{2},k_{2},v_{2},res_{2}).$$ By Remark \ref{embedd}, this is an embedding also in $\mathfrak{L}^{*}_{vf}$, thus $\Gamma_{1}$ and $\Gamma_{2}$ are models of $Th(\Gamma)$ in $\mathfrak{L}^{*}_{oag}$ such that $\Gamma_{1}\subseteq\Gamma_{2}$. Since $Th(\Gamma)$ is model complete in $\mathfrak{L}^{*}_{oag}$ by Proposition \ref{PropMCFS}, the embedding is elementary.
For both valued fields, we consider the coarse valuation and the resulting decomposition (see Section \ref{hvf}). Thus we have the two equicharacteristic zero valued fields $\dot{K}_{i}=(K_{i},\dot{v}_{i})$, $i=1,2$, valued on $\Gamma_{i}/\Delta$ and with residue fields the core fields $\mathring{K}_{i}$, $i=1,2$, on which a mixed characteristic valuation $\mathring{v}_{i}$ is defined with values on $\Delta$ and residue fields $k_{i}$, $i=1,2$ respectively. Note that, by Lemma \ref{MCquotients}, $\Gamma_{1}\preceq\Gamma_{2}$ implies $\Gamma_{1}/\Delta\preceq\Gamma_{2}/\Delta$. So, if also $\mathring{K}_{1}\preceq \mathring{K}_{2}$, then by the resplendent AKE principle for equicharacteristic zero henselian valued fields (Theorem \ref{AKErespl}), we have that $\dot{K}_{1}\preceq\dot{K}_{2}$ in $\mathfrak{L}^{*}_{vf}$. Remark \ref{elemb} implies that if $\dot{K}_{1}\preceq\dot{K}_{2}$ in $\mathfrak{L}^{*}_{vf}$, then $\dot{K}_{1}\preceq\dot{K}_{2}$ in $\mathfrak{L}^{**}_{vf}$. Thus $K_{1}\preceq K_{2}$ in $\mathfrak{L}_{Ring}^{*}$ as expanded rings and since the valuation is existentially and universally definable by Proposition \ref{valdefn}, %and the group with the finite spines is interpretable in the expanded field structure, 
their embedding is elementary in $\mathfrak{L}_{Ring}^{*}$ as valued fields. This will conclude the proof. 
Thus, it remains to show that $\mathring{K}_{1}\preceq \mathring{K}_{2}$, assuming $k_{1}\preceq k_{2}$ in $\mathfrak{L}_{ring}$.

Since $K_{1}$ and $K_{2}$ are $\aleph_{1}$-saturated, by Lemma \ref{saturationpseudocomplete} they are $\omega$-pseudo complete and by Lemma \ref{coarseningpreservespseudocompleteness} also the coarse fields $\dot{K}_{1}$ and $\dot{K}_{2}$ and the core fields $\mathring{K}_{1},\mathring{K}_{2}$ are $\omega$-pseudo complete. In particular, the core fields are actually Cauchy complete, since their value groups are canonically isomorphic to $\mathbb{Z}$. Also, by Remark \ref{ramificationcorefield} we know that the core field has ramification index $e$. Theorem \ref{wittringchar} gives a characterization of complete fields with fixed ramification in terms of finite extensions of the field of fractions of a Witt ring. Indeed, we have $\mathring{K}_{1}=W(k_{1})(\pi)$ for some $\pi\in\mathring{K}_{1}$ with minimal positive valuation and which is a root of a polynomial $$E(x)=x^{e}+c_{e-1}x^{e-1}+\dots+c_{1}x+c_{0}$$ that is Eisenstein over $W(k_{1})$. Thus, $$c_{i}\in M_{W(k_{1})}$$ and $$c_{0}\in M_{W(k_{1})}\setminus\,M^{2}_{W(k_{1})}.$$ $\\$
\textbf{First claim.} The polynomial $E(x)$ is Eisenstein also over $W(k_{2})$ and $\mathring{K}_{2}=W(k_{2})(\pi)$.$\\$
\textit{Proof of the first claim.} First note that, if $val$ is the valuation over $W(k_{1})$, then $$c_{i}\in M_{W(k_{1})}\,\text{if and only if}\,\,c_{i}^{e}p^{-1}\in O_{W(k_{1})}.$$ Indeed, $$val(c_{i}^{e}p^{-1})=e\,val(c_{i})-val(p)=e\,val(c_{i})-e=e(val(c_{i})-1);$$ and that having $c_{0}\in M_{W(k_{1})}\setminus\, M^{2}_{W(k_{1})}$ is equivalent to say that $$c_{0}^{e}p^{-1}\in O_{W(k_{1})}$$ and $$c_{0}^{-e}p\in O_{W(k_{1})}.$$ Indeed, the latter means that $val(c_{0})\leq1$, so from both one has $val(c_{0})=1$.
Now, note that $W(k_{1})$ and $W(k_{2})$ as valued fields are unramified and $p$ is an element with minimal positive valuation in both $W(k_{1})$ and $W(k_{2})$. Then by Proposition \ref{valdefn}, their valuation rings are defined by the same existential formula, so we have that $$c_{i}\in M_{W(k_{2})}$$ and $$c_{0}\in M_{W(k_{2})}\setminus\, M^{2}_{W(k_{2})}.$$ Thus, $E(x)$ is an Eisenstein polynomial also on $W(k_{2})$ and $\pi$ is a root, so it is an element of $\mathring{K}_{2}$ with the minimal positive valuation. Therefore, the extension $W(k_{2})(\pi)$ has dimension $e$ over $W(k_{2})$, which is a subfield of $\mathring{K}_{2}$ of the same dimension and $\pi\in\mathring{K}_{2}$. Hence, $\mathring{K}_{2}=W(k_{2})(\pi)$. $\\$ $\\$
\textbf{Second claim.} It remains to show that the embedding of $W(k_{1})$ into $W(k_{2})$ is elementary, from which we deduce that also $$\mathring{K}_{1}=W(k_{1})(\pi)\longrightarrow W(k_{2})(\pi)=\mathring{K}_{2}$$ is elementary.
$\\$ \textit{Proof of the second claim.} Assume $k_{1}\preceq k_{2}$. Then $W(k_{1})$ and $W(k_{2})$ are unramified henselian valued fields with value group $\mathbb{Z}$ and residue fields $k_{1}$ and $k_{2}$, respectively. Thus, by Theorem \ref{AKE}, we have $W(k_{1})\preceq W(k_{2})$.
It remains to show that $\mathring{K}_{1}\preceq\mathring{K}_{2}$.
We can interpret $W(k_{i})(\pi)$ inside $W(k_{i})$ (for $i=1,2$) as follows. We identify $W(k_{i})(\pi)$ with $W(k_{i})^{e}$. On the $e$-tuples we define addition as the usual addition on vector spaces. Knowing that $E(x)$ is the minimal polynomial of $\pi$ over $W(K_{i})$ we can compute the multiplication by $\pi$ into $W(K_{i})(\pi)$. Thus, an $e\times e$-matrix $M_{\pi}$ is determined and it depends on the coefficients $c_{0},\dots,c_{e-1}$ of $E(x)$. We can define a multiplication over $W(k_{i})^{e}$ as follows
$$(x_{1},\dots,x_{e})\times(y_{1},\dots,y_{e})=(x_{1}I_{e}+x_{2}M_{\pi}+\dots+x_{e}M_{\pi}^{e-1})\begin{pmatrix} y_{1}\\ \vdots \\ y_{e} \end{pmatrix}$$
where $I_{e}$ is the identity $e\times e$-matrix.
Thus, we obtain that $\mathring{K}_{1}\preceq\mathring{K}_{2}$, and the theorem is proved. \end{proof}

Notice that in the proof we use the Witt rings over the residue fields without the hypothesis of perfectness. Indeed, we have the following
\begin{rem} \label{perfect} Let $k$ be a field. If $Th(k)$ is model complete in $\mathfrak{L}_{ring}$, then $k$ is perfect.\end{rem}
\begin{proof} If $char(k)=0$ the assert is obvious. Let $char(k)=p$ where $p>1$ is a prime and consider the field $k^{1/p}=k(a^{1/p}:\,a\in k)$. Then \begin{align*}f:&\,\,k^{1/p}\longrightarrow \,\,k \\ &\,\,\,\,\,x\,\,\,\,\,\,\mapsto \,\,\,x^{p}\end{align*} is an isomorphism of fields. Thus, $k^{1/p}\models Th(k)$, and in particular, $k\subseteq k^{1/p}$ as $\mathfrak{L}_{ring}$-structures. Thus, for $a\in k$, we have that $a$ satisfies the formula $\phi(y)=\exists x(x^{p}=y)$ in $k^{1/p}$, and since $k\preceq k^{1/p}$ by hypothesis, there exists a $p$-th root of $a$ in $k$. Thus, $k$ is perfect. \end{proof}

\section{A one-sorted language for model completeness in the case of value group elementarily equivalent to $\bigoplus_{i<\omega^{*}}{\mathbb{Z}}$}
\subsection{Model theory of $\bigoplus_{i<\omega^{*}}{\mathbb{Z}}$}\label{mtlpz}
Consider $\bigoplus_{i<\omega^{*}}{\mathbb{Z}}$ the lexicographic sum of $\mathbb{Z}$, such that $(\ldots,0,0,1)$ is the minimal positive element. The examples in Section \ref{examples} motivate the choice of this group. We want to extend the results of the previous section to the case of finitely ramified henselian valued fields valued in a group that is elementarily equivalent to $\bigoplus_{i<\omega^{*}}{\mathbb{Z}}$. For this purpose, we give a brief analysis of the model theory of $\bigoplus_{i<\omega^{*}}{\mathbb{Z}}$ as we have done for ordered abelian groups with finite spines. Schmitt's definition of the $n$-spine of an ordered abelian group (see Section \ref{OAGfs}) works for an arbitrary ordered abelian group in the most general case. In \cite{hlt23}, the authors identify some assumptions on an ordered abelian group under which the induced structure on the spine is that of a pure colored total order $(\Gamma,<, C_{\phi})_{\phi\in\mathfrak{L}_{oag}}$, where $C_{\phi}$ are unary predicates (the colours) for each formula $\phi\in\mathfrak{L}_{oag}$. For instance, assume that $(G_{\gamma})_{\gamma\in\Gamma}$ is a family of non-trivial archimedean ordered abelian groups such that for every $\gamma\in\Gamma$, the group $G_{\gamma}$ is either a discretely ordered group or a dense one, and where $(\Gamma,<)$ is an ordered set. Consider the Hahn product $$H=\{f\in\prod_{\gamma\in\Gamma}G_{\gamma}\,:\,supp(f)\,\text{is a well-ordered subset of}\,(\Gamma,<)\},$$ equipped with the lexicographic order. Then $H$ is an ordered abelian group with spine $(\Gamma,<, C_{\phi_{1}}, C_{\phi_{2}})_{\phi_{1},\phi_{2}\in\mathfrak{L}_{oags}}$, where $\phi_{1}$ and $\phi_{2}$ describe, respectively, discretely ordered groups and densely ordered groups and, for $i=1,2$, $$C_{\phi_{i}}(\gamma)\,\Longleftrightarrow\,G_{\gamma}\models\phi_{i}.$$ In the case of the lexicographic sum of $\mathbb{Z}$, one can easily note that the induced structure on the spine is $(\omega,<)$, and there are no colours. Thus, we start giving the following fact deduced from (\cite[Theorem 2.18]{hlt23}), which is obtained by a reduction of Cluckers and Halupczok's language for the relative quantifier elimination in ordered abelian groups (\cite{CH11}).
\begin{fact} \label{factlpz} Let $\mathfrak{L}$ be a language consisting of \begin{itemize}
\item the main sort $G$ with $+,-,0,<,\equiv_{m}\,(m\in\mathbb{N})$;
\item an auxiliary sort $\Gamma$ with $<,0,\infty,s:\Gamma\longrightarrow\Gamma$;
\item $val^{n}:G\longrightarrow\Gamma\,(n\in\mathbb{N},n\neq1)$,
\item an unary predicate $=^{\bullet}k_{\bullet}$ on $G$ for each $k\in\mathbb{Z}\setminus\{0\}$,
\item an unary predicate $\equiv^{\bullet}_{m}k_{\bullet}$ on $G$ for each $m\geq2\,and\,k\in\{1,\ldots,m-1\}$.
\end{itemize}
Then the theory of $G=\bigoplus_{i<\omega^{*}}{\mathbb{Z}}$ has quantifier elimination in $\mathfrak{L}$, where
\begin{itemize}
\item $\Gamma=\omega\cup\{\infty\}$,
\item $s(n)=n+1$,
\item for every $a\in G$, $val^{n}(a):=minsupp(a\,\mod{nG})$ if $a\notin nG$, \\ $val^{n}(a):=\infty$ otherwise (or equivalently $val^{n}(a)$ is the index $i$ of the largest convex subgroup $H_{i}$ such that $a\notin H_{i}+nG$),
\item for every $a\in G$, $a=^{\bullet}k_{\bullet}$ if $a+H_{i}$ is $k$ times the minimal element of $G/H_{i}$ for some $i\in\Gamma$,
\item for every $a\in G$, $a\equiv^{\bullet}_{m}k_{\bullet}$ if $a+H_{i}$ is congruent modulo $m$ to $k$ times the minimal element of $G/H_{i}$ for some convex subgroup $H_{i}$.
\end{itemize}
\end{fact}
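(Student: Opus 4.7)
The plan is to derive Fact~\ref{factlpz} as a specialization of \cite[Theorem 2.18]{hlt23} to the particular ordered abelian group $G=\bigoplus_{i<\omega^{*}}\mathbb{Z}$. First I would realize $G$ as a Hahn sum of archimedean components, all equal to $\mathbb{Z}$ and hence discretely ordered. Because no dense components appear, the colouring on the spine collapses completely: the ``dense'' colour $C_{\phi_{2}}$ is everywhere false, the ``discrete'' colour $C_{\phi_{1}}$ is everywhere true, so the induced structure on the spine reduces to a pure linear order with no nontrivial unary predicates, which is the setting in which \cite[Theorem~2.18]{hlt23} applies most cleanly.

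Next I would identify the auxiliary sort $\Gamma$ explicitly as $\omega\cup\{\infty\}$. The nonzero convex subgroups of $G$ are exactly the subgroups $H_{i}$ of sequences supported on the final segment $\{j\in\omega^{*}:j\leq i\}$; since this family is well ordered by inclusion, the spine is naturally $(\omega,<)$ equipped with the successor map $s$ sending $H_{i}$ to $H_{i+1}$. The function symbol $val^{n}:G\to\Gamma$ is interpreted as the index of the largest convex subgroup $H_{i}$ with $a\notin H_{i}+nG$, with $val^{n}(a)=\infty$ when $a\in nG$; this agrees with $\min\mathrm{supp}(a\bmod nG)$ because an element fails $n$-divisibility precisely at the largest index in its non-$n$-divisible support, and $\omega^{*}$ reverses the order on $\omega$.

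The substantive step is to show that the unary predicates $=^{\bullet}k_{\bullet}$ and $\equiv^{\bullet}_{m}k_{\bullet}$ suffice in place of the two-variable Cluckers--Halupczok relations $x=_{H_{i}}y+j\cdot 1_{i}$ and $x\equiv_{m,H_{i}}y+j\cdot 1_{i}$ used in Proposition~\ref{QEFS}. After passing to the difference $x-y$, these binary relations become unary assertions about membership in cosets $k\cdot 1_{i}+H_{i}$ and $k\cdot 1_{i}+mG+H_{i}$, where $1_{i}$ is the minimal positive element of $G/H_{i}$; the predicates in Fact~\ref{factlpz} record exactly this information for some $i\in\Gamma$, and the $val^{n}$ symbol pins down which $i$ is relevant. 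Combining this translation with the trivialisation of the colour structure, \cite[Theorem~2.18]{hlt23} delivers quantifier elimination in the language $\mathfrak{L}$.

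The main technical obstacle I expect is making this last reduction rigorous: one must verify that the unary predicates, together with the spine sort and $val^{n}$, are strong enough to eliminate the existential quantifiers that appear in the general Cluckers--Halupczok axioms, rather than yielding a strictly weaker language. A careful case analysis on whether $x-y$ lies in $nG$, combined with the explicit form of representatives of $1_{i}$ in the Hahn-sum description, should resolve this and reduce the fact to a direct invocation of \cite[Theorem~2.18]{hlt23}.
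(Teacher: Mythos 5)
Your proposal takes essentially the same route as the paper: the paper states this as a \emph{Fact} and offers no proof beyond the citation of \cite[Theorem 2.18]{hlt23} together with the observation (made just above the Fact) that the induced structure on the spine of $\bigoplus_{i<\omega^{*}}\mathbb{Z}$ is the pure order $(\omega,<)$ with no colours, and your plan — specialize \cite[Theorem 2.18]{hlt23}, note that all archimedean components are $\mathbb{Z}$ so the colour predicates trivialize, identify $\Gamma$ with $\omega\cup\{\infty\}$ and $s$ with the successor — is exactly that deduction spelled out. The one caveat is that your ``substantive step'' (verifying that the unary predicates $=^{\bullet}k_{\bullet}$, $\equiv^{\bullet}_{m}k_{\bullet}$ carry the same information as the two-variable coset relations) is likely already internal to \cite[Theorem 2.18]{hlt23} rather than something you need to re-derive, so if you carry this out you should first check whether that theorem is already stated in the one-variable form before duplicating work.
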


We determine a language for the theory of $G$ to be model complete.
\begin{prop} \label{proplpz}The theory of $G=\bigoplus_{i<\omega^{*}}{\mathbb{Z}}$ is model complete in the one-sorted language $\mathfrak{L}^{\star}_{oag}$ consisting of
\begin{itemize}
    \item $+,-,0,<$ for the oag $G$,
    \item for every $n,m\in\mathbb{N}$ a relation symbol $|^{n,m}$ on $G$,
    \item for every $n,m\in\mathbb{N}$ a binary predicate $\overline{s}^{n,m}$ on $G$
    \item for every $a\in G$, $a=^{\bullet}1_{\bullet}$ if $a+H_{i}$ is the minimal element of $G/H_{i}$ for some convex subgroup $H_{i}$.
\end{itemize}\end{prop}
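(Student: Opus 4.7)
The plan is to imitate the structure of Proposition~\ref{PropMCFS}, but now starting from Fact~\ref{factlpz} in place of the Halevi--Hasson quantifier elimination. Concretely, I would show that every atomic formula of the two-sorted language $\mathfrak{L}$ of Fact~\ref{factlpz}, in free variables from the main sort, is equivalent modulo $Th(G)$ to both an existential and a universal formula in $\mathfrak{L}^{\star}_{oag}$. Once this is done, an arbitrary $\mathfrak{L}^{\star}_{oag}$-formula $\varphi$ can be read as an $\mathfrak{L}$-formula, replaced by an equivalent quantifier-free $\mathfrak{L}$-formula $\psi$ via Fact~\ref{factlpz}, and then translated back atomic-by-atomic into existential (respectively universal) $\mathfrak{L}^{\star}_{oag}$-formulas; this is enough for model completeness.

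For atomic formulas built from $+, -, 0, <$ and the congruences $\equiv_{m}$, the existential/universal definability is the standard one: $x \equiv_{m} y$ is witnessed by $\exists z\,(x - y = mz)$, and its negation is a finite disjunction over nonzero remainders, giving the universal form as well. The novelty lies in the treatment of the terms $val^{n}(x)$ and the successor function $s$ on the auxiliary spine sort. I would interpret $x \,|^{n,m}\, y$ as the comparison $val^{n}(x) \leq val^{m}(y)$, and $\overline{s}^{n,m}(x,y)$ as $val^{m}(y) = s(val^{n}(x))$. Any atomic $\mathfrak{L}$-formula mentioning iterated successors $s^{k}$ is then unpacked by chaining $k-1$ witnesses through $\overline{s}^{\cdot,\cdot}$, giving an existential definition; its negation decomposes into a finite disjunction of mutually exclusive cases expressible using $|^{n,m}$ and $\overline{s}^{n,m}$, which provides the universal form after routine manipulation.

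The remaining spine predicates $=^{\bullet}k_{\bullet}$ and $\equiv^{\bullet}_{m}k_{\bullet}$ are reduced to the sole primitive $=^{\bullet}1_{\bullet}$ exactly as in the last paragraph of the proof of Proposition~\ref{PropMCFS}: $a =^{\bullet}k_{\bullet}$ iff there exist $r_{1}, \ldots, r_{k}$ each satisfying $=^{\bullet}1_{\bullet}$, lying in the same coset of the relevant convex subgroup, and summing to $a$ (existential), and dually one obtains a universal version by quantifying over all such tuples and requiring the corresponding sum to coincide with $a$ modulo that convex subgroup; the congruence $\equiv^{\bullet}_{m}k_{\bullet}$ then follows by composing with an existential witness $t$ for $a - mt =^{\bullet}k_{\bullet}$.

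The step I expect to be delicate is verifying that the predicates $|^{n,m}$, $\overline{s}^{n,m}$, and $=^{\bullet}1_{\bullet}$ jointly capture every interaction between the main sort and the spine after the reduction from Cluckers--Halupczok's language. Since only finitely many applications of $s$ appear in any fixed formula the chaining stays finitary, but one still has to check that the witnesses $z_{i}$ used to unpack iterated successors can be located within $G$ itself; this should follow from the fact that in any model of $Th\bigl(\bigoplus_{i<\omega^{*}}\mathbb{Z}\bigr)$ every convex subgroup $H_{i}$ admits a representative of the minimal positive element of $G/H_{i}$, so the chains along the spine are witnessed by genuine elements of $G$.
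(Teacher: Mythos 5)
Your proposal is correct and takes essentially the same route as the paper: both start from the quantifier elimination of Fact~\ref{factlpz}, interpret the spine order and successor via the predicates $|^{n,m}$ and $\overline{s}^{n,m}$, and reduce the predicates $=^{\bullet}k_{\bullet}$ and $\equiv^{\bullet}_{m}k_{\bullet}$ to the single primitive $=^{\bullet}1_{\bullet}$, exactly as in Proposition~\ref{PropMCFS}. Your extra care about unpacking iterated successors $s^{k}$ by chaining $\overline{s}$-witnesses and about witness availability in arbitrary models is a reasonable elaboration that the paper leaves implicit, but it does not change the underlying argument.
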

\begin{proof}
We define, for every $n,m\in\mathbb{N}$ the relation $|^{n,m}$ on $G$ as follows $$x|^{n,m}y\Longleftrightarrow val^{n}(x)<val^{m}(y)$$ to interpret the order of $\Gamma$ in $G$.
As well, define the binary relation $\overline{s}^{n,m}$ on $G$ as $$\overline{s}^{n,m}(x,y)\Longleftrightarrow s(val^{n}(x))=val^{m}(y)$$ in order to interpret the successor function of $\Gamma$ in $G$. Thus $$\overline{s}^{n,m}(x,y)=\left\{(x,y)\in G^{2}\vert\,\,\, x|^{n,m}y\,\wedge\,\forall z\in G\,\neg(x|^{n,l}z\wedge z|^{l,m}y),\text{for any}\,l\in\mathbb{N}\right\}.$$
Finally note that, for each $m\geq2$ and $k\in\{1,\ldots,m-1\},$ $$x\equiv_{m}^{\bullet}k_{\bullet}\Longleftrightarrow\exists b\in G\,(x-mb=^{\bullet}k_{\bullet})$$ and \begin{align*}x\not\equiv_{m}^{\bullet}k_{\bullet}\Longleftrightarrow &\exists b\in G\,\,x-mb=^{\bullet}(1+k)_{\bullet}\,\vee\,x-mb=^{\bullet}(2+k)_{\bullet}\,\vee\ldots \\ &\ldots\vee\,x-mb=^{\bullet}(m-1+k)_{\bullet}.\end{align*}
Also, for every $k\geq1$, \begin{equation}\begin{split}x=^{\bullet}k_{\bullet}\iff & \exists z,y\,(x=\underbrace{z+\ldots+z}_{k\;times}+y\,\wedge\,x|^{0,0}z\,\wedge z|^{0,0}x\,\wedge\,z=^{\bullet}1_{\bullet}\,\wedge \\ & \wedge\,y|^{0,0}x\,\lor\, y=0)\end{split}\end{equation} and \begin{equation}\begin{split}x=^{\bullet}k_{\bullet}\iff & \forall z\in G\,(x|^{0,0}z\,\wedge z|^{0,0}x\,\wedge\,z=^{\bullet}1_{\bullet})\,\rightarrow (x-(\underbrace{z+\ldots+z}_{k\;times}))\,\vert^{0,0}\,x.\end{split}\end{equation} Thus, for every $k$, the predicates $=^{\bullet}k_{\bullet}$ are existentially and universally definable using $=^{\bullet}1_{\bullet}$, so we can substitute them in the language in order to have the model completeness of the theory of $G$.
\end{proof}

\subsection{Value group elementarily equivalent to $\bigoplus_{i<\omega^{*}}{\mathbb{Z}}$}
In this section we give an analogue of Theorem \ref{TheoremModelCompletenessFiniteRamification} for henselian valued field of mixed charactestistic $(0,p)$, fixed ramification $e\geq1$ and value group elementarily equivalent to $\bigoplus_{i<\omega^{*}}{\mathbb{Z}}$.

\begin{thm} \label{TheoremModelCompletenessProductZ}
Let $K$ be an Henselian valued field of mixed characteristic $(0,p)$, finite ramification $e\geq1$, residue field and value group a model $G$ of $Th(\bigoplus_{i<\omega^{*}}{\mathbb{Z}})$. If the theory of the residue field is model complete in the language of rings, then the theory of $K$ is model complete in the language $\mathfrak{L}_{ring}^{\star}$ consisting of 
\begin{itemize}
   \item $+,0,\cdot,1$
   \item for every $n,m\in\mathbb{N}$ a relation symbol $||^{n,m}$,
   \item for every $n,m\in\mathbb{N}$ a binary predicate $\$^{n,m}$,
   \item an unary predicate $A$,
\end{itemize}
where \begin{itemize}
    \item for every $x,y\in K$, $x||^{n,m}y\Longleftrightarrow val^{n}(v(x))\leq val^{m}(v(y))$,
    \item for every $x,y\in K$, $\$^{n,m}(x,y)\Longleftrightarrow val^{m}(v(y))=s(val^{n}(v(x)))$,
    \item $A^{K}=\left\{x\in K\,\vert\,v(x)=^{\bullet}1_{\bullet}\right\}$.
\end{itemize} 
\end{thm}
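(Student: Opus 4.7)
The plan is to follow the proof of Theorem \ref{TheoremModelCompletenessFiniteRamification} step by step, substituting Proposition \ref{proplpz} for Proposition \ref{PropMCFS} at the value-group level. First I would introduce an auxiliary three-sorted language $\mathfrak{L}_{vf}^{\star\star}=(\mathfrak{L}_{Ring},\mathfrak{L}_{oag}^{\star},\mathfrak{L}_{ring},v,res)$ that expands $\mathfrak{L}_{vf}$ on the group sort with the symbols $|^{n,m},\overline{s}^{n,m},=^{\bullet}1_{\bullet}$ of Proposition \ref{proplpz}, and prove that it is bi-interpretable with the one-sorted $\mathfrak{L}_{ring}^{\star}$, in complete analogy with Lemma \ref{bi-interpretablelanguages} and Remark \ref{embedd}. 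The predicates $||^{n,m},\$^{n,m},A$ of $\mathfrak{L}_{ring}^{\star}$ are by construction the pullbacks under $v$ of the corresponding group-level predicates, so substructure and elementary-extension relations transfer between the two languages and I may work in the more convenient three-sorted setting.

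I would then pass to $\aleph_{1}$-saturated models $K_1\subseteq K_2$ of $Th(K)$ in $\mathfrak{L}_{ring}^{\star}$. Lemma \ref{extvf} ensures this is an extension of valued fields, bi-interpretability yields $\Gamma_1\subseteq\Gamma_2$ in $\mathfrak{L}_{oag}^{\star}$, and Proposition \ref{proplpz} upgrades this to $\Gamma_1\preceq\Gamma_2$. Coarsening at the smallest convex subgroup $\Delta$ of $\Gamma_i$ containing $v(p)$ produces equicharacteristic-zero henselian valued fields $\dot{K}_1\subseteq\dot{K}_2$ with value groups $\Gamma_i/\Delta$ and residue fields the core fields $\mathring{K}_i$, which retain ramification $e$ (Lemma \ref{ramificationcorefield}) and are Cauchy-complete with value group $\mathbb{Z}$ (Fact \ref{saturationpseudocomplete}, Lemma \ref{coarseningpreservespseudocompleteness}, Remark \ref{coreCauchycomplete}). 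The Witt-ring argument of the previous theorem then carries over verbatim: one exhibits a common Eisenstein polynomial $E(x)$ with $\mathring{K}_i=W(k_i)(\pi)$ using Proposition \ref{valdefn}, applies $k_1\preceq k_2$ and AKE to obtain $W(k_1)\preceq W(k_2)$, and interprets $W(k_i)(\pi)$ as $W(k_i)^{e}$ via the matrix of multiplication by $\pi$ to lift elementarity to $\mathring{K}_1\preceq\mathring{K}_2$.

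The main obstacle, and the only genuinely new input required, is an analog of Proposition \ref{MCquotients} in the present setting: if $\Gamma,\Gamma'$ are $\aleph_{1}$-saturated models of $Th(\bigoplus_{i<\omega^{*}}\mathbb{Z})$ with $\Gamma\preceq\Gamma'$, then $\Gamma/\langle 1_{\Gamma}\rangle_{conv}\preceq\Gamma'/\langle 1_{\Gamma'}\rangle_{conv}$. I would prove this by the same back-and-forth as in Proposition \ref{MCquotients}: any partial elementary map between finitely generated substructures must send $1_{\Gamma}$ to $1_{\Gamma'}$, hence identify the minimal convex subgroups, and therefore descends well-definedly to the quotients, with saturation providing the required extensions. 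The delicate check is that the quotient carries the predicates of $\mathfrak{L}_{oag}^{\star}$ as the canonical ``shifted'' versions of the original ones, in the spirit of Lemma \ref{LemmaModelCompleteQuotientFiniteSpines}; this is nontrivial because $\mathfrak{L}_{oag}^{\star}$ contains the infinite families $|^{n,m}$ and $\overline{s}^{n,m}$, which must be verified to descend consistently to $\Gamma/\langle 1_{\Gamma}\rangle_{conv}$.

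With $\Gamma_1/\Delta\preceq\Gamma_2/\Delta$ and $\mathring{K}_1\preceq\mathring{K}_2$ in hand, the resplendent AKE principle of Theorem \ref{AKErespl} yields $\dot{K}_1\preceq\dot{K}_2$ in $\mathfrak{L}_{vf}^{\star\star}$; bi-interpretability transports this to $K_1\preceq K_2$ in $\mathfrak{L}_{ring}^{\star}$, and existential and universal definability of the valuation (Proposition \ref{valdefn}) confirms the embedding is elementary also in the valued-field sense, completing the argument.
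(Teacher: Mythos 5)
Your plan coincides with the paper's: the paper reduces the proof to that of Theorem~\ref{TheoremModelCompletenessFiniteRamification}, supplying only the bi-interpretability infrastructure (Lemma~\ref{bi-interpretability2}, Remark~\ref{embedd2}, Corollary~\ref{elemb2}) for the new language $\mathfrak{L}_{ring}^{\star}$, which is exactly what you propose, with Proposition~\ref{proplpz} playing the role of Proposition~\ref{PropMCFS}. Your observation that the quotient argument (Proposition~\ref{MCquotients}) and its compatibility with the infinite families $|^{n,m}$, $\overline{s}^{n,m}$ need to be rechecked in $\mathfrak{L}_{oag}^{\star}$ is a legitimate point of care that the paper passes over by declaring the proof ``similar''.
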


The proof is similar to the proof of Theorem \ref{TheoremModelCompletenessFiniteRamification}. We only need to note, as in the previous section, the following lemma on bi-interpretability of the languages, in order to deal with different languages when applying the resplendent version of AKE Theorem (Theorem \ref{AKErespl}).
\begin{lem} \label{bi-interpretability2} Consider $\mathfrak{L}_{oag}^{\star}$ and the expansion of the language of valued fields $$\mathfrak{L}_{vf}^{\star}=(\mathfrak{L}_{ring},\mathfrak{L}_{oag}^{\star},\mathfrak{L}_{ring},v,res).$$ Let $(K,\Gamma^{\star},k,v,res)$ be an $\mathfrak{L}_{vf}^{\star}$-structure. Consider the language $$\mathfrak{L}_{vf}^{\star\star}=(\mathfrak{L}_{ring}^{\star},\mathfrak{L}_{oag},\mathfrak{L}_{ring},v,res),$$ where $\mathfrak{L}^{\star}_{ring}$ is the language defined in Theorem \ref{TheoremModelCompletenessProductZ}. Then, the languages $\mathfrak{L}_{vf}^{\star}$ and $\mathfrak{L}_{vf}^{\star\star}$ are bi-interpretable.\end{lem}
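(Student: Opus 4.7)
The plan is to follow the template of Lemma \ref{bi-interpretablelanguages}, adapting the argument to the predicates introduced in this section. The two languages $\mathfrak{L}^{\star}_{vf}$ and $\mathfrak{L}^{\star\star}_{vf}$ agree on the $\mathfrak{L}_{ring}$ symbols on the field and residue-field sorts, on $+,0,-,<$ on the group sort, and on the function symbols $v$ and $res$. Thus the only thing to check is that the extra predicates on one side are interdefinable with the extra predicates on the other side, once the valuation map $v$ is available.

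In the forward direction, the new field-sort predicates $||^{n,m}$, $\$^{n,m}$ and $A$ are defined directly, by their stated formulas in Theorem \ref{TheoremModelCompletenessProductZ}, as the pullbacks through $v$ of the group-sort predicates $|^{n,m}$, $\overline{s}^{n,m}$ and $=^{\bullet}1_{\bullet}$. For example, $x\,||^{n,m}y$ is literally $v(x)\,|^{n,m}\,v(y)$, and $A(x)$ is literally $v(x)=^{\bullet}1_{\bullet}$. So any $\mathfrak{L}^{\star}_{vf}$-structure gives rise, by forgetting the new group-sort symbols and adding the corresponding field-sort predicates so defined, to an $\mathfrak{L}^{\star\star}_{vf}$-structure.

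For the converse direction, I would use the surjectivity of the valuation $v\colon K\twoheadrightarrow\Gamma$. Given elements $\alpha,\beta\in\Gamma$, pick any $x,y\in K$ with $v(x)=\alpha$, $v(y)=\beta$, and set
\[
\alpha\,|^{n,m}\,\beta \;\Longleftrightarrow\; x\,||^{n,m}\,y,\qquad \overline{s}^{n,m}(\alpha,\beta)\;\Longleftrightarrow\;\$^{n,m}(x,y),\qquad \alpha=^{\bullet}1_{\bullet}\;\Longleftrightarrow\;A(x).
\]
Since the field-sort predicates were defined as pullbacks through $v$, these definitions do not depend on the choice of representatives, and the surjectivity of $v$ ensures that every tuple in $\Gamma$ arises from some tuple in $K$. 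This recovers the new $\mathfrak{L}^{\star}_{oag}$-symbols as $\emptyset$-definable relations in the $\mathfrak{L}^{\star\star}_{vf}$-structure.

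The two translations are mutual inverses essentially by construction, and they commute with the unchanged symbols, so both the underlying universes and the interpretations of all symbols are recovered on the nose. Hence $\mathfrak{L}^{\star}_{vf}$ and $\mathfrak{L}^{\star\star}_{vf}$ are bi-interpretable. No genuine obstacle arises, as in Lemma \ref{bi-interpretablelanguages} the statement is purely a matter of reading off the definitions; the substantive consequence, the analogue of Remark \ref{embedd} and Corollary \ref{elemb} for these languages, will then follow by the same argument as before and is what actually gets used in the proof of Theorem \ref{TheoremModelCompletenessProductZ}.
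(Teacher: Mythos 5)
Your argument is correct and takes essentially the same route as the paper, which leaves the proof to the reader (by analogy with Lemma \ref{bi-interpretablelanguages}, whose proof is simply ``It follows from the definition of the languages''); you spell out the intended content: each new symbol on one side is the pullback of, or is recovered by surjectivity of $v$ from, the corresponding new symbol on the other side. One small caveat worth noting: the paper defines $|^{n,m}$ with strict $<$ but $||^{n,m}$ with $\leq$, so the two are not literally pullbacks of one another as you assert; this is evidently a typo in the paper, and in any case the two are easily interdefinable, so your conclusion stands.
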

Also in this case, we use the following remark.
\begin{rem}\label{embedd2} Let $$\mathcal{K}_{1}=(K_{1},\Gamma_{1}^{\star},k_{1},v_{1},res_{1}),$$ $$\mathcal{K}_{2}=(K_{2},\Gamma_{2}^{\star},k_{2},v_{2},res_{2})$$  be two $\mathfrak{L}_{vf}^{\star}$-structures and $$\widetilde{\mathcal{K}_{1}}=(K_{1}^{\star},\Gamma_{1},k_{1},v_{1},res_{1}),$$ $$\widetilde{\mathcal{K}_{2}}=(K_{2}^{\star},\Gamma_{2},k_{2},v_{2},res_{2})$$ the corresponding $\mathfrak{L}_{vf}^{\star\star}$-structures. Then $$\mathcal{K}_{1}\subseteq_{\mathfrak{L}_{vf}^{\star}}\mathcal{K}_{2}\iff\widetilde{\mathcal{K}_{1}}\subseteq_{\mathfrak{L}_{vf}^{\star\star}}\widetilde{\mathcal{K}_{2}}.$$\end{rem}
\begin{proof} 
As in Remark \ref{elemb}, it suffices to prove that the thesis holds for the restrictions of the languages to the first two sorts and the function symbol $v$ defined between them.  $\\$
First note that $(K_{1},\Gamma_{1}^{\star},v_{1})\subseteq_{\mathfrak{L}_{{vf}^{\star}_{\restriction_{ K,\Gamma^{\star}}}}}(K_{2},\Gamma_{2}^{\star},v_{2})$ if and only if \newline $K_{1}\subseteq_{\mathfrak{L}_{ring}}K_{2}$, $\Gamma_{1}^{\star}\subseteq_{\mathfrak{L}_{oag}^{\star}}\Gamma_{2}^{\star}$ and $v_{2\restriction_{K_{1}}}=v_{1}$. Analogously, it holds for $(K_{1}^{\star},\Gamma_{1},v_{1})$ and $(K_{2}^{\star},\Gamma_{2},v_{2})$.\\
Thus, suppose that  $$(K_{1}^{\star},\Gamma_{1},v_{1})\nsubseteq_{\mathfrak{L}_{{vf}^{\star\star}_{\restriction_{K^{\star},\Gamma}}}}(K_{2}^{\star},\Gamma_{2},v_{2}).$$ \\ If $\Gamma_{1}\nsubseteq_{\mathfrak{L}_{oag}}\Gamma_{2}$, then clearly $\Gamma_{1}^{*}\nsubseteq_{\mathfrak{L}_{oag}^{*}}\Gamma_{2}^{*}$, and so $$(K_{1},\Gamma_{1}^{\star},v_{1})\nsubseteq_{\mathfrak{L}_{{vf}^{\star}_{\restriction_{K,\Gamma^{\star}}}}}(K_{2},\Gamma_{2}^{\star},v_{2}).$$ The same argument works if $v_{2}$ does not extend $v_{1}$. \\ Thus, suppose $K_{1}^{\star}\nsubseteq_{\mathfrak{L}_{ring}^{\star}}K_{2}^{\star}$. If $K_{1}^{\star}\nsubseteq K_{2}^{\star}$ as pure fields, we clearly have the assert. Thus, suppose $K_{1}^{\star}\subseteq K_{2}^{\star}$. We have three different cases \begin{itemize}
\item[i.] there are $x,y\in K_{1}^{\star}$ such that $x||^{n,m}y$ in $K_{1}^{\star}$ and $\neg\,x||^{n,m}y$ in $K_{2}^{\star}$. This holds if and only if $$v_{1}(x)|^{n,m}v_{1}(y) \,\,\text{in} \,\,\Gamma_{1}^{\star}\,\, \text{and}\,\, v_{2}(x)\not{|}^{n,m}v_{2}(y)\,\, \text{in}\,\, \Gamma_{2}^{\star},$$ and this is so if and only $\Gamma_{1}^{\star}\nsubseteq_{\mathfrak{L}^{\star}_{oag}}\Gamma_{2}^{\star}$, since $v_{2}$ extends $v_{1}$;
\item[ii.] $\$^{K_{1}^{\star}}\neq\$^{K_{2}^{\star}}\cap (K_{1}^{\star})^{2}$, thus there are $x,y\in K_{1}^{\star}$ such that $(x,y)\in(\$^{n,m})^{K_{1}^{\star}}$ and $(x,y)\notin(\$^{n,m})^{K_{2}^{\star}}$. This holds if and only if $$(v_{1}(x),v_{1}(y))\in(\overline{s}^{n,m})^{\Gamma_{1}^{\star}}\,\,\text{and}\,\, (v_{2}(x),v_{2}(y))\in(\overline{s}^{n,m})^{\Gamma_{2}^{\star}}$$ and since again $v_{2}$ extends $v_{1}$ we have that $(\overline{s}^{n,m})^{\Gamma_{1}^{\star}}\neq(\overline{s}^{n,m})^{\Gamma_{2}^{\star}}\cap \Gamma_{1}^{\star}.$ Thus $\Gamma_{1}^{\star}\nsubseteq_{\mathfrak{L}^{\star}_{oag}}\Gamma_{2}^{\star}$;
\item[iii.] $A^{K_{1}^{\star}}\neq A^{K_{2}^{\star}}\cup K_{1}^{\star}$, if and only if there is $x\in K_{1}^{\star}$ such that $$v_{1}(x)=^{\bullet}1_{\bullet}\,\,\text{and}\,\, v_{1}(x)=^{\bullet}1_{\bullet},$$ but since $v_{2}(x)=v_{1}(x)$ then $$(=^{\bullet}1_{\bullet})^{\Gamma_{1}^{\star}}\neq(=^{\bullet}1_{\bullet})^{\Gamma_{2}^{\star}}\cap\Gamma_{1}^{\star}.$$
\end{itemize}
\end{proof}

As for Corollary \ref{elemb}, the next result follows from Lemma \ref{bi-interpretability2} and Remark \ref{embedd2}.

\begin{cor} \label{elemb2} Let $\mathcal{K}_{1},\mathcal{K}_{2}$ be $\mathfrak{L}^{*}_{vf}$-structures such that $\mathcal{K}_{1}\subseteq\mathcal{K}_{2}$. Then $$\mathcal{K}_{1}\preceq_{\mathfrak{L}_{vf}^{\star}}\mathcal{K}_{2}\iff\widetilde{\mathcal{K}_{1}}\preceq_{\mathfrak{L}_{vf}^{\star\star}}\widetilde{\mathcal{K}_{2}}.$$\end{cor}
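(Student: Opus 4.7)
The plan is to mimic precisely the proof of Corollary~\ref{elemb}, using Lemma~\ref{bi-interpretability2} (bi-interpretability of $\mathfrak{L}_{vf}^{\star}$ and $\mathfrak{L}_{vf}^{\star\star}$) together with Remark~\ref{embedd2} (preservation of substructures under the translation). As a first step, Remark~\ref{embedd2} gives that $\mathcal{K}_{1}\subseteq_{\mathfrak{L}_{vf}^{\star}}\mathcal{K}_{2}$ is equivalent to $\widetilde{\mathcal{K}_{1}}\subseteq_{\mathfrak{L}_{vf}^{\star\star}}\widetilde{\mathcal{K}_{2}}$, so both inclusions are available throughout the argument and I only need to compare first-order theories.

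For the direction from left to right, assume $\mathcal{K}_{1}\preceq_{\mathfrak{L}_{vf}^{\star}}\mathcal{K}_{2}$ and suppose toward a contradiction that $\widetilde{\mathcal{K}_{1}}\not\preceq_{\mathfrak{L}_{vf}^{\star\star}}\widetilde{\mathcal{K}_{2}}$. Then there is an $\mathfrak{L}_{vf}^{\star\star}$-formula $\widetilde{\phi}(x)=\exists y\,\psi(x,y)$ with $\psi$ quantifier-free and a parameter tuple $a\in K_{1}^{\star}\cup\Gamma_{1}\cup k_{1}$ such that $\widetilde{\mathcal{K}_{1}}\models\widetilde{\phi}(a)$ but $\widetilde{\mathcal{K}_{2}}\not\models\widetilde{\phi}(a)$. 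Using Lemma~\ref{bi-interpretability2}, each occurrence of a new field-sort predicate $||^{n,m}$, $\$^{n,m}$, or $A$ in $\widetilde{\phi}$ can be rewritten as the pullback along $v$ of the corresponding group-sort predicate $|^{n,m}$, $\overline{s}^{n,m}$, or $=^{\bullet}1_{\bullet}$; this produces an $\mathfrak{L}_{vf}^{\star}$-formula $\phi$ and a corresponding parameter tuple $b$ from $\mathcal{K}_{1}$ (with field-sort components unchanged and value-group-sort components drawn from $\Gamma_{1}^{\star}$) such that $\mathcal{K}_{1}\models\phi(b)$ while $\mathcal{K}_{2}\not\models\phi(b)$, contradicting $\mathcal{K}_{1}\preceq_{\mathfrak{L}_{vf}^{\star}}\mathcal{K}_{2}$.

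The reverse direction is entirely symmetric: starting from a separating $\mathfrak{L}_{vf}^{\star}$-formula for $\mathcal{K}_{1},\mathcal{K}_{2}$, the bi-interpretability of Lemma~\ref{bi-interpretability2} allows me to replace occurrences of $|^{n,m}$, $\overline{s}^{n,m}$, and $=^{\bullet}1_{\bullet}$ on the value-group sort by their field-sort counterparts $||^{n,m}$, $\$^{n,m}$, and $A$ applied to preimages under $v$, which are available because $v_{2}$ extends $v_{1}$. This yields an $\mathfrak{L}_{vf}^{\star\star}$-formula that separates $\widetilde{\mathcal{K}_{1}}$ and $\widetilde{\mathcal{K}_{2}}$, contradicting the hypothesis $\widetilde{\mathcal{K}_{1}}\preceq_{\mathfrak{L}_{vf}^{\star\star}}\widetilde{\mathcal{K}_{2}}$.

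There is no genuine obstacle beyond the bookkeeping of how parameters move between sorts during translation; since the valuation $v$ is a shared symbol of both languages and is preserved under the inclusion by hypothesis, the translation of parameters and predicates goes through uneventfully, exactly as in the proof of Corollary~\ref{elemb}. The corollary is thus an immediate consequence of Lemma~\ref{bi-interpretability2} and Remark~\ref{embedd2}, and needs no new ingredient.
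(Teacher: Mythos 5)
Your proposal is correct and takes essentially the same route as the paper: the paper disposes of Corollary \ref{elemb2} by simply invoking Lemma \ref{bi-interpretability2} and Remark \ref{embedd2} and repeating the formula-translation argument given for Corollary \ref{elemb}, which is exactly what you carry out (you merely write out both directions and the predicate-by-predicate translation more explicitly than the paper does).
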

We conclude this section with the following remark. For practical purpose, we were interested in working with ordered abelian groups elementarily equivalent to the lexicographic sum of $\mathbb{Z}$ with a minimal positive element. At the beginning of Section \ref{mtlpz} we introduced it as an example of ordered abelian groups with spine $(\omega, <)$ and no colours. Thus, we have the following.
\begin{rem} Fact \ref{factlpz}, Proposition \ref{proplpz} and Theorem \ref{TheoremModelCompletenessProductZ} hold still if we replace $\bigoplus_{i<\omega^{*}}{\mathbb{Z}}$ with any group $G$ of order type $\omega$ and with no colours. \end{rem}

\section{Examples} \label{examples}
\begin{itemize}\item Consider the field $\mathbb{Q}_{p}((t^{\mathbb{Z}}))$ of Hahn series over $\mathbb{Q}_{p}$ in the indeterminate $t$ and value group $\mathbb{Z}$. The element $h=\sum_{e\in\mathbb{Z}}c_{e}t^{e}$ is a Hahn series if $supp(h)=\{e\in\mathbb{Z}\vert\,c_{e}\neq0\}$ is well ordered. The usual valuation $v_{t}$ over $\mathbb{Q}_{p}((t^{\mathbb{Z}}))$ of the Hahn construction is such that $v(0)=\infty$ and $v(h)=min\,supp(h)$ for every Hahn series $h\neq0$. Its residue field is $\mathbb{Q}_{p}$. Note that $v_{t}$ is a coarsening of a valuation $val$ defined over $\mathbb{Q}_{p}((t^{\mathbb{Z}}))$ with values in $\mathbb{Z}\times\mathbb{Z}$, that is a composition of the $p$-adic valuation $v_{p}$ over $\mathbb{Q}_{p}$ and the valuation $v_{t}$. Thus $$O_{val}=\left\{x\,\vert\,val(x)\geq0\right\}=\left\{x\,\vert\,v_{t}(x)>0\,\,\text{or}\,\,v_{t}(x)=0\wedge v_{p}(ac_{t}(x))\geq0\right\}.$$ Note that the ramification index is $e=1$, the residue field with respect to the valuation $val$ is $\mathbb{F}_{p}$ and the value group is an ordered abelian group with finite spines (with the lexicographic order), whose only non-trivial proper definable convex subgroup is $\{0\}+\mathbb{Z}$. \\ So, by Theorem \ref{TheoremModelCompletenessFiniteRamification}, $Th((\mathbb{Q}_{p}((t^{\mathbb{Z}})),val))$ is model complete in the language of rings together with two predicates $A_{0}, A_{1}$ such that $$\begin{array}{rl} A_{0}^{\mathbb{Q}_{p}((t^{\mathbb{Z}}))}&=\left\{x\,\vert\,val(x)\in(0,0)+(\{0\}+\mathbb{Z})\right\} \\
&=\left\{x\,\vert\,x=\sum_{i\geq0}a_{i}t^{i},a_{0}\neq0\right\}\end{array}$$ and $$\begin{array}{rl} A_{1}^{\mathbb{Q}_{p}((t^{\mathbb{Z}}))}&=\left\{x\,\vert\,val(x)\in(1,0)+(\{0\}+\mathbb{Z})\right\} \\
&=\left\{x\,\vert\,x=\sum_{i\geq1}a_{i}t^{i},a_{1}\neq0\right\}.\end{array}$$
\item The previous example can be generalized to the field of Hahn series over $\mathbb{Q}_{p}$ in $n$ indeterminates. We can consider, as before, the valuation $val_{n}$ over $\mathbb{Q}_{p}((t_{1}^{\mathbb{Z}}))\ldots((t_{n}^{\mathbb{Z}}))$, valued in $\bigoplus_{i=1}^{n+1}\mathbb{Z}$, such that $$O_{val_{n}}=\bigcup_{i=0}^{n}O^{i},$$ where $$\begin{array}{rl} O^{n}=&\{x\,\vert\,v_{t_{n}}(x)>0\} \\
O^{n-1}=&\{x\,\vert\,v_{t_{n}}(x)=0\wedge v_{t_{n-1}}(ac_{t_{n}}(x))>0\} \\
O^{n-2}=&\{x\,\vert\,v_{t_{n}}(x)=0\wedge v_{t_{n-1}}(ac_{t_{n}}(x))=0\wedge v_{t_{n-2}}(ac_{t_{n-1}}(ac_{t_{n}}(x))>0\}\\
&\vdots \\
O^{0}=&\{x\,\vert\,v_{t_{n}}(x)=0\wedge v_{t_{n-1}}(ac_{t_{n}}(x))=0\,\wedge v_{t_{n-2}}(ac_{t_{n-1}}(ac_{t_{n}}(x)))=0\,\wedge \\ 
&\ldots\wedge v_{t_{1}}(ac_{t_{2}}(\dots(ac_{t_{n}}(x))))=0\,\wedge v_{p}(ac_{t_{1}}(\dots(ac_{t_{n}}(x))..))>0\}\end{array}$$
Note that $val_{n}$ is unramified, the residue field is $\mathbb{F}_{p}$ and the value group is an ordered abelian group with finite spines (in particular it has finitely many convex definable subgroups). So, by Theorem \ref{TheoremModelCompletenessFiniteRamification}, the theory of the valued field $$\mathcal{K}=(\mathbb{Q}_{p}((t_{1}^{\mathbb{Z}}))\ldots((t_{n}^{\mathbb{Z}})),val_{n})$$ is model complete in the language of rings together with predicates $A_{i,0}, A_{i,1}$, for $i=0,\ldots,n$, such that 
$$\begin{array}{rl} A_{i,0}^{\mathcal{K}}&=\{x\in\mathcal{K}\,\vert\,val_{n}(x)\in H_{i}\} \\
&=\{x\in\mathcal{K}\,\vert\,x\in O^{i}\wedge v_{t_{i}}(ac_{t_{i+1}}(\ldots(ac_{t_{n}}(x))..))=0 \}, \end{array}$$
$$\begin{array}{rl} A_{i,1}^{\mathcal{K}}&=\{x\in\mathcal{K}\,\vert\,val_{n}(x)\in\,1_{i}+H_{i}\} \\
&=\{x\in\mathcal{K}\,\vert\,x\in O^{i}\wedge v_{t_{i}}(ac_{t_{i+1}}(\ldots(ac_{t_{n}}(x))..))=1 \}. \end{array}$$
%=\{x\in\mathbb{Q}_{p}[[t_{1}^{\mathbb{Z}}]]\ldots[[t_{n}^{\mathbb{Z}}]]\,\vert\,x=\sum_{j\geq i}a_{j}t^{j},a_{i}=1\}. \end{array}$$
\item The following valued field is an example for Theorem \ref{TheoremModelCompletenessProductZ}. Consider the field of Hahn series over $\mathbb{Q}_{p}$ in infinitely many indeterminates $$K=\bigcup_{n\in\mathbb{N}}\mathbb{Q}_{p}((t_{1}^{\mathbb{Z}}))\ldots((t_{n}^{\mathbb{Z}})).$$ We define a valuation $val_{\infty}$ over $K$ with values in $\bigoplus_{i<\omega^{*}}\mathbb{Z}$ from the valuations $val_{n}$ over $\mathbb{Q}_{p}((t_{1}^{\mathbb{Z}}))\ldots((t_{n}^{\mathbb{Z}}))$, such that $$O_{val_{\infty}}=\bigcup_{n\in\mathbb{N}}O_{val_{n}}.$$ By Theorem \ref{TheoremModelCompletenessProductZ}, we have a one-sorted language in which the valued field $\mathcal{K}=(K,val_{\infty})$ has a model complete theory.
\end{itemize}
 
In this paper, we focus on the cases in which there is a language for quantifier elimination for the theory of the value group. We would like to have an expansion of the language of the field structure which contains all the information on the structure of the value group. For an arbitrary oag, the best result on quantifier elimination is due to Cluckers and Halupczok in \cite{CH11}. Inspired by Schmitt's language (\cite{Schmitt82}), they provide a language with additional sorts for the spines of a group $G$ and prove quantifier elimination of $G$ relative to the spines. Thus, one natural problem is to identify a language where the finitely ramified valued field has relative quantifier elimination in the most general case in which the value group is an arbitrary ordered abelian group.

\vspace{3mm}
%\textbf{Question.} Give an analoDefine extra sorts from elements in the valued field that can interpret the spines, and then give a quantifier elimination result relative to these extra sorts.

%Thus, a natural question could be if one can define some extra sorts of elements in the valued field that can interpret the spines, and then give a quantifier elimination result relative to these extra sorts. Ref to RV structures?

\textbf{Acknowledgements.} This work represents the first part of my Ph.D. thesis. I would like to thank my advisor Prof. Paola D'Aquino for her constant guidance and support. I would also like to thank Konstantinos Kartas for pointing out Remark \ref{perfect}, Martina Liccardo for the constant willingness to talk about mathematics, and Pierre Touchard for the many fruitful discussions.
\vspace{3mm}

\textbf{Conflicts of Interest.} None declared.
\vspace{3mm}

\bibliographystyle{acm}              
\bibliography{References2}

\end{document}